\newtheorem{Theorem}{Theorem}[section]
\newtheorem{Proposition}[Theorem]{Proposition}
\newtheorem{Lemma}[Theorem]{Lemma}
\newtheorem{Corollary}[Theorem]{Corollary}
\theoremstyle{definition}
\newtheorem{Definition}[Theorem]{Definition}
\newtheorem{Remark}[Theorem]{Remark}
\newcommand{\bTheorem}[1]{
\begin{Theorem} \label{T#1} }
\newcommand{\eT}{\end{Theorem}}
\newcommand{\bProposition}[1]{
\begin{Proposition} \label{P#1}}
\newcommand{\eP}{\end{Proposition}}
\newcommand{\bLemma}[1]{
\begin{Lemma} \label{L#1} }
\newcommand{\eL}{\end{Lemma}}
\newcommand{\bCorollary}[1]{
\begin{Corollary} \label{C#1} }
\newcommand{\eC}{\end{Corollary}}
\newcommand{\bRemark}[1]{
\begin{Remark} \label{R#1} }
\newcommand{\eR}{\end{Remark}}
\newcommand{\bDefinition}[1]{
\begin{Definition} \label{D#1} }
\newcommand{\eD}{\end{Definition}}
\newcommand{\Q}{T^N}
\newcommand{\dif}{\mathrm{d}}
\newcommand{\Del}{\Delta_x}
\newcommand{\vme}{\vm_\ep}
\newcommand{\tvm}{{\tilde{\vc{m}}}}
\newcommand{\bfphi}{\boldsymbol{\varphi}}
\newcommand{\bFormula}[1]{
\begin{equation} \label{#1}}
\newcommand{\eF}{\end{equation}}
\newcommand{\Ov}[1]{\overline{#1}}
\newcommand{\aleq}{\lesssim}
\newcommand{\vr}{\varrho}
\newcommand{\vre}{\vr_\ep}
\newcommand{\vue}{\vu_\ep}
\newcommand{\tvr}{\tilde{\varrho}}
\newcommand{\vu}{\vc{u}}
\newcommand{\vm}{\vc{m}}
\newcommand{\vc}[1]{{\bf #1}}
\newcommand{\Div}{{\rm div}_x}
\newcommand{\Grad}{\nabla_x}
\newcommand{\dx}{\,{\rm d} {x}}
\newcommand{\dt}{\,{\rm d} t }
\newcommand{\intQ}[1]{\int_{{\Q}} #1 \ \dx}
\newcommand{\D}{{\rm d}}
\newcommand{\ep}{\varepsilon}
\definecolor{Cgrey}{rgb}{0.85,0.85,0.85}
\definecolor{Cblue}{rgb}{0.50,0.85,0.85}
\definecolor{Cred}{rgb}{1,0,0}
\definecolor{fancy}{rgb}{0.10,0.85,0.10}
\newcommand\Cbox[2]{%
    \newbox\contentbox%
    \newbox\bkgdbox%
    \setbox\contentbox\hbox to \hsize{%
        \vtop{
            \kern\columnsep
            \hbox to \hsize{%
                \kern\columnsep%
                \advance\hsize by -2\columnsep%
                \setlength{\textwidth}{\hsize}%
                \vbox{
                    \parskip=\baselineskip
                    \parindent=0bp
                    #2
                }%
                \kern\columnsep%
            }%
            \kern\columnsep%
        }%
    }%
    \setbox\bkgdbox\vbox{
        \color{#1}
        \hrule width  \wd\contentbox %
               height \ht\contentbox %
               depth  \dp\contentbox
        \color{black}
    }%
    \wd\bkgdbox=0bp%
    \vbox{\hbox to \hsize{\box\bkgdbox\box\contentbox}}%
    \vskip\baselineskip%
}
\date{}
\begin{document}

%%%%%%%%%%%%%%%%%%%%%%%%%%%%%%%%

\title{Solution semiflow to the isentropic Euler system}

\author{Dominic Breit}
\address[D. Breit]{Department of Mathematics, Heriot-Watt University, Riccarton Edinburgh EH14 4AS, UK}
\email{d.breit@hw.ac.uk}

\author{Eduard Feireisl}
\address[E.Feireisl]{Institute of Mathematics AS CR, \v{Z}itn\'a 25, 115 67 Praha 1, Czech Republic}
\email{feireisl@math.cas.cz}
\thanks{The research of E.F. leading to these results has received funding from
the Czech Sciences Foundation (GA\v CR), Grant Agreement
18--05974S. The Institute of Mathematics of the Academy of Sciences of
the Czech Republic is supported by RVO:67985840.}

\author{Martina Hofmanov\'a}
\address[M. Hofmanov\'a]{Fakult\"at f\"ur Mathematik, Universit\"at Bielefeld, D-33501 Bielefeld, Germany}
\email{hofmanova@math.uni-bielefeld.de}

\begin{abstract}

{It is nowadays well understood that the multidimensional isentropic Euler system is desperately ill--posed.}
{Even certain smooth initial data give rise to infinitely many solutions and all available  selection criteria fail to  ensure both global existence and uniqueness.} We propose a different approach to well--posedness of this system based on ideas from the theory of Markov semigroups: we show the existence of a Borel measurable solution semiflow. To this end, we introduce a notion of dissipative solution which is understood as time dependent trajectories of the basic state variables -
the mass density, the linear momentum, and the energy - in a suitable phase space. The underlying system of PDEs is satisfied in a generalized sense. The solution semiflow enjoys the standard semigroup property and the solutions coincide with the strong solutions
as long as the latter exist. Moreover, they minimize the energy (maximize the energy dissipation) among all dissipative solutions.

\end{abstract}

%\subjclass[2010]{60H15, 60H30, 35Q30,
%76M35, 76N10}
\keywords{Isentropic Euler system, solution semiflow, dissipative solution}

\date{\today}

\maketitle

\tableofcontents

\section{Introduction}
\label{I}

The motion of a compressible isentropic fluid in the Eulerian reference frame is described by the time evolution of
the mass density $\vr = \vr(t,x)$, $t \geq 0$, $x \in Q \subset R^N$, $N=1,2,3,$ and the momentum $\vm = \vm(t,x)$ solving the
\emph{Euler system}:
\begin{equation} \label{I1}
\begin{split}
\partial_t \vr + \Div \vm = 0,\\
\partial_t \vm + \Div \left( \frac{\vm \otimes \vm}{\vr} \right) + a \Grad \vr^\gamma = 0,\ { a > 0},
\end{split}
\end{equation}
where $\gamma > 1$ is the adiabatic constant. The problem is closed by prescribing the \emph{initial data}
\begin{equation} \label{I2}
\vr(0, \cdot) = \vr_0, \ \vm(0,\cdot) = \vm_0,
\end{equation}
as well as appropriate boundary conditions. For the sake of simplicity, we eliminate possible problems connected with the presence of kinematic boundary by considering the
space--periodic flows, for which the physical domain can be identified with the flat torus,
\begin{equation} \label{I5}
Q = T^N = \left\{ [-1,1]|_{\{-1; 1\}} \right\}^N.
\end{equation}

It is well--known that solutions of \eqref{I1} develop singularities -- shock waves -- in finite time no matter how smooth or
small the initial data are. Accordingly, the concept of weak (distributional) solution has been introduced to study global--in--time behavior of system \eqref{I1}. The existence of weak solutions in the simplified monodimensional geometry has been established
for a rather general class of initial data, see Chen and Perepelitsa \cite{ChenPer2}, DiPerna \cite{DiP3}, Lions, Perthame and Souganidis \cite{LPS}, among others. More recently, the theory of convex integration has been used to show existence of weak solutions for $N=2,3$ again for a rather vast class of data, see Chiodaroli \cite{Chiod}, De Lellis and Sz\' ekelyhidi \cite{DelSze3},
Luo, Xie and Xin \cite{LuXiXi}.

Uniqueness and stability with respect to the initial data in the framework of weak solutions is a more delicate issue. Apparently, the
Euler system is ill--posed in the class of weak solutions and explicit
examples of multiple solutions emanating from the same initial state have been constructed, see e.g. the monograph of Smoller \cite{SMO}.
An admissibility criterion must be added to the weak formulation of \eqref{I1} in order  to select the physically relevant solutions. To this
end, consider the total energy $e$ given by
\[
e(\vr, \vm) = e_{\rm kin}(\vr, \vm) + e_{\rm int}(\vr),\ e_{\rm kin} (\vr,\vm)= \frac{1}{2} \frac{|\vm|^2}{\vr},\ e_{\rm int} (\vr)= \frac{a}{\gamma - 1} \vr^\gamma.
\]
The \emph{admissible solutions} satisfy, in addition to the weak version of \eqref{I1}, the total energy balance
\begin{equation} \label{I3}
\partial_t  e(\vr, \vm) + \Div \left[ \left(e(\vr, \vm) + a \vr^\gamma \right) \frac{\vm}{\vr} \right] \leq 0,
\end{equation}
or at least its integrated form,
\begin{equation} \label{I4}
\frac{{\rm d}}{{\rm d}t} E(t) \leq 0,\ E \equiv \int_{Q} \left[ \frac{|\vm|^2}{\vr} + \frac{a}{\gamma - 1} \vr^\gamma \right] \dx.
\end{equation}
Note that \eqref{I4} follows directly form \eqref{I3} thanks to the periodic boundary conditions; the same holds,
of course,
under suitable conservative boundary conditions, for instance,
\[
\vm \cdot \vc{n}|_{\partial Q} = 0.
\]
Note that the \emph{inequality} {in  \eqref{I3} is needed to select the physically relevant discontinuous shock--wave solutions.}

Even if \eqref{I3} is imposed as an extra selection criterion, the weak solutions are still not unique, see Chiodaroli, De Lellis and Kreml \cite{ChiDelKre}, Markfelder and Klingenberg \cite{MarKli}. The initial data giving rise to infinitely many admissible solutions are termed \emph{wild data}. As shown in \cite{ChiDelKre}, this class {includes certain Lipschitz initial data}. {Recently, 
this result has been extended to smooth initial data by Chiodaroli et al \cite{ChKrMaSw}.} Furthermore, even if additional selection criteria as, for instance, maximality of the energy dissipation,
are imposed, the problem remains ill--posed, see Chiodaroli and Kreml \cite{ChiKre}.

An important feature of systems with uniqueness is their  semiflow property: Letting the system run from time $0$ to time $s$ and then restarting and letting it run from time $s$ to time $t$ gives the same outcome as letting it run directly from time $0$ to time $t$. In other words, the knowledge of the whole past up to time $s$ provides no more useful information about the outcome at time $t$ than knowing the state of the system at time $s$ only. For systems where the uniqueness is unknown or not valid, a natural question is  whether a solution semiflow can be constructed anyway.

Therefore, inspired by the recent work of Cardona and Kapitanski \cite{CorKap}, we  propose a different approach to well--posedness of the Euler
system based on the theory of Markov selection in stochastic analysis, see e.g. Krylov \cite{KrylNV}, Stroock and Varadhan \cite{StrVar}, Flandoli and Romito \cite{FlaRom}, or \cite{BFH18markov}. More specifically, we establish the existence of a \emph{semiflow selection} for the Euler system
\eqref{I1}--\eqref{I5}, that is, a mapping
\[
{U}: [t, \vr_0, \vm_0, E_0] \mapsto  [\vr(t), \vm(t), E(t)], \ t \geq 0
\]
enjoying the semigroup property:
\begin{equation} \label{I6}
{U} [t_1 + t_2, \vr_0, \vm_0, E_0] = {U} \left[ t_2, {U} [t_1, \vr_0, \vm_0, E_0] \right]
\ \mbox{for any}\  t_1, t_2\geq 0,
\end{equation}
where $[\vr, \vm]$ represents a generalized solution to \eqref{I1}--\eqref{I5} with the energy $E$. More specifically,
the triple $[\vr, \vm, E]$ termed \emph{dissipative solution} will coincide with the expected value of
suitable measure--valued solution satisfying the Euler system \eqref{I1}, together with the energy inequality \eqref{I4}, satisfied
in a generalized sense. The precise definitions may be found in Section  \ref{W}.  In addition to the semigroup property \eqref{I6}, the semiflow we shall construct enjoys the following properties, which provides further justification of the physical relevance of our construction:
\begin{itemize}
\item{\bf Stability of strong solutions.}
Let the Euler system \eqref{I1}--\eqref{I5} admit a strong $C^1$ solution $\widehat{\vr}$, $\widehat{\vm}$, with the
associated energy
\[
E_0 = \int_{Q} \left[ \frac{|\vm_0|^2}{\vr_0} + \frac{a}{\gamma - 1} \vr_0^\gamma \right] \dx,
\]
defined on a maximal time interval $[0, T_{\rm max})$.

Then we have
\[
U[t, \vr_0, \vm_0, E_0] = [\widehat{\vr}, \widehat{\vm}, E_0](t) \ \mbox{for all}\ t \in [0,T_{\rm max}).
\]
{This reflects the fact that dissipative solutions satisfy the weak--strong uniqueness principle.} 

\item{\bf Maximal dissipation.} Let the Euler system \eqref{I1}--\eqref{I5} admit a dissipative solution  $\widehat{\vr}$, $\widehat{\vm}$, with the
associated energy $\widehat{E}$ such that
\[
\widehat{E}(t) \leq E(t) \ \mbox{for all}\ t \geq 0,
\]
where $E$ is the energy of the solution semiflow $U[t, \vr_0, \vm_0, E_0]$.

Then we have
\[
E(t) = \widehat{E}(t) \ \mbox{for all}\ t \geq 0.
\]
In other words, our search for physically relevant solutions respects the ideas of Dafermos \cite{Dafer} who introduced the selection criterion based on the maximization of the energy dissipation  for hyperbolic systems of conservation laws.

\item{\bf Stability of stationary states.} Let $\Ov{\vr} > 0$, $\vm \equiv 0$ be a stationary solution of the Euler system
\eqref{I1}--\eqref{I5}. Suppose that
\[
\vr(T, \cdot) = \Ov{\vr},\ \vm(T, \cdot) = 0 \ \mbox{for some}\ T \geq 0,
\]
where $\vr$, $\vm$ are the density and the momentum components of a solution semiflow $U[t, \vr_0, \vm_0, E_0]$.

Then we have
\[
\vr(t, \cdot) = \Ov{\vr},\ \vm(t, \cdot) = 0 \ \mbox{for all}\ t \geq T.
\]
Hence, if the system reaches a stationary state where the density is constant and the momentum vanishes, it remains in this state for all future times.

\end{itemize}

The fact that certain form of an energy inequality has to be included as an integral part of the definition of solution is pertinent to the analysis of problems in fluid mechanics. One of the main novelties of our approach is  including the total energy $E$ as a third variable in the construction of the semiflow. Intuitively speaking, the knowledge of the initial state for the density and the momentum does not provide sufficient information to restart the semiflow. We have already observed a similar phenomenon in the context of Markov selection for stochastic compressible Navier--Stokes system in \cite{BFH18markov}.

Our definition of dissipative solution is motivated by the notion of dissipative measure--valued solution known e.g. from \cite{FGSWW1}, \cite{GSWW}. However, we chose a different formulation which in our opinion reflects better the nature of the system and is more suitable for the construction of the solution semiflow. Similarly to the notion of dissipative measure--valued solution, our definition   permits to establish the weak--strong uniqueness principle. Consequently, strong solutions are \emph{always} contained in the selected semiflow as long as they exist.

Note that this desirable
property is not granted for the semiflow of weak solutions to the incompressible Navier--Stokes system presented in \cite{CorKap}. 
More precisely, even for the incompressible Navier--Stokes system, where global existence of unique solutions has not yet been excluded for smooth initial data, the  semiflow in \cite{CorKap}
may ``select'' completely pathological solutions like those that start from zero but have positive energy at later times (such solutions may exist thanks to the recent work by Buckmaster and Vicol \cite{BV18}).

To conclude this introduction, we remark that our method applies mutatis mutandis to the incompressible Navier--Stokes and Euler system as well as to the isentropic Navier--Stokes system. We have chosen the isentropic Euler system for this paper as it is the system where uniqueness seems to be the most out of reach. However, it would be interesting to investigate whether for one of the ``easier'' systems one could understand further  properties of the solution semiflow such as the dependence on the initial data. Moreover, uniqueness of the solution semiflow is also an open problem.

The paper is organized as follows. In Section \ref{W}, we introduce the concept of dissipative solution and state the main result concerning the semiflow selection. Section \ref{ES} is devoted to the proof of existence and stability of the dissipative solutions.
In Section \ref{A}, we present the abstract setting and in Section~\ref{SSS}, we show the existence of the semiflow selection. Section \ref{C} contains concluding discussion concerning refined properties of the constructed semiflow.

\section{Set--up and main results}
\label{W}
In this section we present several definitions of generalized solutions to the compressible Euler system. In particular, we introduce the dissipative solutions
and explain the concept of admissibility.
Finally, we present our main result on semiflow selection in
Section \ref{S}.

\subsection{Weak solutions}

Weak solutions of the Euler system \eqref{I1} on the time interval $[0,\infty)$ satisfy the integral identities
\begin{equation} \label{W1}
\left[ \intQ{ \vr \varphi } \right]_{t = 0}^{t = \tau}
= \int_{0}^{\tau} \intQ{ \Big[ \vr \partial_t \varphi + \vm \cdot \Grad \varphi \Big] } \dt
\end{equation}
for any $\varphi \in C^1_c([0, \infty) \times {T^N})$, and
\begin{equation} \label{W2}
\left[ \intQ{ \vm \cdot \bfphi } \right]_{t = 0}^{t = \tau}
= \int_{0}^{\tau} \intQ{ \left[ \vm \cdot \partial_t \bfphi + \frac{\vm \otimes \vm}{\vr} : \Grad \bfphi
+ a \vr^\gamma \Div \bfphi \right] } \dt
\end{equation}
for any $\bfphi \in C^1_c([0, \infty) \times {T^N}; R^N)$. A weak solution
to \eqref{I1} is a pair of measurable functions $[\vr,\vc{m}]$ such that all integrals in \eqref{W1} and \eqref{W2} are well--defined.
In accordance with the energy inequality \eqref{I4}, we suppose additionally that
\begin{align*}
&E = E(t) \ \mbox{is a non--increasing function of} \ t,\\
&\intQ{ \left[ \frac{1}{2} \frac{|\vm|^2 }{\vr} + \frac{a}{\gamma - 1} \vr^\gamma\right] (\tau, \cdot)  } =
E(\tau) \ \mbox{for a.a.}\ \tau.
\end{align*}
Consistently with \eqref{W1}, \eqref{W2}, this can be put in a variational form,
\begin{equation} \label{EW4}
\left[  E \psi  \right]_{t = \tau_1 - }^{t = \tau_2 +} -
\int_{\tau_1}^{\tau_2} E(t) \partial_t \psi(t) \ \dt \leq 0,\ 0 \leq \tau_1 \leq \tau_2,\ E(0-) = E_0,
\end{equation}
for any $\psi \in C^1_c ([0, \infty))$, $\psi \geq 0$.

\subsection{Dissipative solutions}
If $N=2,3$, the energy inequality \eqref{EW4} seems to be the only source of {\it a priori} bounds. However, as indicated by the numerous
examples of ``oscillatory'' solutions (cf. \cite{Chiod}, \cite{DelSze3}) the set of all admissible weak solutions emanating from given initial data is not
closed with respect to the weak topology on the trajectory space associated with the energy bounds \eqref{EW4}. There are two potential sources
of difficulties:

\begin{itemize}

\item non--controllable \emph{oscillations} due to accumulation of singularities;

\item blow--up type collapse due to possible \emph{concentration} points.

\end{itemize}

\noindent
To accommodate the above mentioned singularities in the closure of the set of weak solutions, two kinds of tools are used:
(i) the Young measures describing the oscillations, (ii) concentration defect measures for concentrations,  see e.g.
Brenier, De Lellis, Sz\' ekelyhidi \cite{BrDeSz}.

Let
\[
\mathcal{S} = \left\{ [\tvr, \tvm] \ \Big| \ \tvr \geq 0, \ \tvm \in R^N \right\}
\]
be the phase space associated to the Euler system. Let $\mathcal{P}(\mathcal{S})$ denote the set of probability measures on $\mathcal{S}$ and let $\mathcal{M}^{+}(T^{N})$ and $\mathcal{M}^{+}(T^{N}\times S^{N-1})$, respectively, denote the set of positive bounded Radon measures on $T^{N}$ and $T^{N}\times S^{N-1}$, respectively, where $S^{N-1}\subset R^{N}$ denotes the unit sphere.
A
\emph{dissipative solution} is defined via the following quantities:

\begin{itemize}
\item the
Young measure:
\begin{equation} \label{D1}
(t,x) \mapsto \nu_x(t) \in L^\infty_{{\rm weak - (*)}}((0, \infty) \times \Q; \mathcal{P}(\mathcal{S}));
\end{equation}
\item the kinetic and internal energy concentration defect measures:
\begin{align} \label{D2a}
t &\mapsto \mathfrak{C}_{\rm kin}(t) \in L^\infty_{\rm weak - (*)}(0, \infty; \mathcal{M}^+(\Q)), \\
t &\mapsto \mathfrak{C}_{\rm int}(t) \in L^\infty_{\rm weak - (*)}(0, \infty; \mathcal{M}^+(\Q));\label{D2b}
\end{align}
\item the convective and pressure concentration defect measures:
\begin{align} \label{D3a}
t &\mapsto \mathfrak{C}_{\rm conv} (t) \in L^\infty_{\rm weak - (*)}
{\left(0,\infty; \mathcal{M}^+\left(\Q \times S^{N-1} \right) \right)},
\\ t &\mapsto \mathfrak{C}_{\rm press} (t) \in {L^\infty_{\rm weak - (*)}\left(0,\infty; \mathcal{M}^+(\Q) \right).
}
\label{D3b}
\end{align}

\end{itemize}

\noindent The constitutive relations
\[
\frac{\vm \otimes \vm}{\vr} = 2 \left( \frac{\vm}{|\vm|} \otimes \frac{\vm}{|\vm|} \right) \left[ \frac{1}{2} \frac{|\vm|^2}{\vr} \right],\ a \varrho^\gamma = (\gamma - 1) \left[ \frac{a}{\gamma - 1}  \vr^\gamma \right]
\]
enforce natural compatibility conditions
\begin{equation} \label{D5}
\mathfrak{C}_{\rm conv}(t,\dif x,\dif\xi) = {2 r_x(t,\dif\xi)  }\otimes \mathfrak{C}_{{\rm kin}}(t,\dif x),\quad
\mathfrak{C}_{\rm press} = (\gamma - 1) \mathfrak{C}_{\rm int},
\end{equation}
where $r_x(t)\in \mathcal{P} (S^{N-1})$ are the measures associated to disintegration of
$\mathfrak{C}_{\rm conv}(t)$ on the product $T^N \times S^{N-1}$, see e.g. Ambrosio, Fusco, and Palara \cite[Theorem 2.28]{AmFuPa}.

Hereafter, we denote
by $[\tilde\vr,\tilde\vm]$ the dummy variables in phase space $\mathcal{S}$ whereas $\xi$ is a dummy variable in $S^{N-1}$.
We are now in the position to present the basic building block for the semiflow selection -
a dissipative solution of the Euler system
\eqref{I1}. 
%\textcolor{blue}{Ed: I modified the notation below to expresses explicitly that the integrands are Borel measurable.}

\begin{Definition}[Dissipative solution] \label{DD1}

The triple of functions
\[
{
[\vr, \vm, E]\in C_{\mathrm{weak,loc}}([0,\infty); L^\gamma (T^N)) \times C_{\mathrm{weak,loc}}([0,\infty); L^{\frac{2\gamma}{\gamma + 1}}(T^N; R^N))\times BV_{\mathrm{loc}}([0,\infty))}
\]
is called \emph{dissipative solution} of the Euler system
\eqref{I1} with the initial data
\[
{[\vr_0, \vm_0, E_0]\in L^\gamma(T^N)\times L^{\frac{2 \gamma}{\gamma + 1}}(T^N;R^N)\times [0,\infty) }
\]
if there exists a family of parametrized measures specified through
\eqref{D1}--\eqref{D5} such that:
\begin{itemize}
\item[a)] for a.a $\tau >0$ we have
\begin{equation} \label{D4}
\begin{split}
\vr(\tau,x) &= \left< \nu_{x}(\tau); \tvr \right> \geq 0, \
\vm(\tau,x) = \left< \nu_{x}(\tau); \tvm \right> \ \mbox{for a.a}\ x \in \Q, \\
E(\tau) &= \intQ{ \left< \nu_{x}(\tau); \frac{1}{2} \frac{|\tvm|^2}{\tvr} + \frac{a}{\gamma -1} \tvr^\gamma \right>}
+ \int_{\Q} \ \D \mathfrak{C}_{\rm kin}(\tau) + \int_{\Q} \ \D \mathfrak{C}_{\rm int}(\tau)\ ;
\end{split}
\end{equation}

\item[b)] for any $\tau > 0$ the integral identity
\begin{equation} \label{W4}
\left[ \intQ{ \vr \varphi } \right]_{t = 0}^{t = \tau}
= \int_{0}^{\tau} \intQ{ \Big[ \vr \partial_t \varphi + \vm \cdot \Grad \varphi \Big] } \dt,
\end{equation}
holds for
any {$\varphi \in C^1_c([0, \infty) \times {T^N})$}, where $\vr(0, \cdot) = \vr_0$;

\item[c)]
for any $\tau > 0$ the integral identity
\begin{equation} \label{W5}
\begin{split}
&\left[ \int_{\Q} \vm \cdot \bfphi \dx \right]_{t = 0}^{t = \tau}-  \int_{0}^{\tau} \intQ{\vm \cdot \partial_t \bfphi }\dt\\
&= \int_{0}^{\tau} \intQ{ \left[  \left< \nu_{x}(t); 1_{\tvr > 0}\frac{\tvm \otimes \tvm}{\tvr} \right> : \Grad \bfphi
+ \left< \nu_{x}(t); a \tvr^\gamma \right> \Div \bfphi \right] } \dt \\
&+ {
2 \int_{0}^{\tau} \int_{\Q}  \left< r_{x}(t); \xi \otimes \xi \right> : \Grad \bfphi \
 \ {\rm d}
\mathfrak{C}_{{\rm kin}} \,\dif t }
+ (\gamma - 1)\int_0^\tau \int_{\Q}  \Div \bfphi \ \D \mathfrak{C}_{{\rm int}}  \,\dif t,
\end{split}
\end{equation}
holds for
any $\bfphi \in C^1_c([0, \infty) \times {T^N}; R^N)$, where $\vm(0, \cdot) = \vm_0$;
\item[d)] for any $0 \leq \tau_1 \leq \tau_2$
the inequality
\begin{equation} \label{W6}
\begin{split}
\Big[ E \psi \Big]_{t = \tau_1-}^{t = \tau_2+} -
\int_{\tau_1}^{\tau_2} E \partial_t \psi \ \dt \leq 0 ,
\end{split}
\end{equation}
holds for any $\psi \in C^1_c([0, \infty))$ with $\psi \geq 0$, where $E(0-) = E_0$.
\end{itemize}

\end{Definition}

Note that our definition is slightly different from the one used by Gwiazda, \'Swierczewska-Gwiazda, and
Wiedemann \cite{GSWW} based on the concentration defect
measures introduced by Alibert and Bouchitt\' e \cite{AliBou}. We believe that the present setting based on the energy defects rather than the recession functions reflects better the underlying system of PDEs. It is also worth noting that the present definition contains definitely more
information on the dissipative solutions than its counterpart introduced in \cite{FGSWW1}
in the context of the compressible Navier--Stokes system. The class of solutions
considered in \cite{FGSWW1} is apparently larger but still guarantees the weak--strong uniqueness principle.
Indeed, the corresponding proof in \cite{FGSWW1} adapts easily to the Euler setting. In particular, we obtain
the following result that can be proved exactly as \cite{FGSWW1}, see also Gwiazda et al. \cite{GSWW} and Section \ref{REI} below.

\begin{Proposition}[Weak--strong uniqueness] \label{WP1}
Let $[\vr, \vm, E]$ be a dissipative solution to \eqref{I1} in the sense of Definition \ref{DD1} starting from the initial state
$[\vr_0,\ \vm_0, \ E_0]$, $\vr_0 > 0$.
Let $[\widehat{\vr}, \widehat{\vc{m}}]$ be a strong solution\footnote{A strong solution belongs to the class $W^{1,\infty}$ and satisfies \eqref{I1} a.e. pointwise.} to \eqref{I1}
in $[0, T_{\rm max}) \times T^N$ starting from the same initial data
$\widehat{\vr}_0 = \vr_0$, $\widehat{\vc{m}}_0 = \vm_0$, with
\[
\intQ{\left[ \frac{1}{2} \frac{|\widehat{\vc{m}}_0|^2}{\widehat{\vr}_0} + \frac{a}{\gamma - 1} {\widehat{\vr}}^\gamma_0 \right]} = E_0.
\]

Then we have
\[
\vr = \widehat{\vr}, \ \vm = \widehat{\vc{m}},\ E = E_0 \ \mbox{in}\ [0, T_{\rm max}) \times T^N.
\]

\end{Proposition}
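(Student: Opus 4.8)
The plan is to run a relative energy (modulated energy) argument, comparing the dissipative solution $[\vr,\vm,E]$ against the strong solution $[\widehat\vr,\widehat\vm]$. First I would introduce the relative energy functional
\[
\mathcal{E}\big(\vr,\vm \,\big|\, \widehat\vr,\widehat\vm\big)(\tau)
= \int_{\Q}\Big\langle \nu_x(\tau); \tfrac12\tfrac{|\tvm|^2}{\tvr} + \tfrac{a}{\gamma-1}\tvr^\gamma\Big\rangle\dx
+ \int_{\Q}\D\mathfrak{C}_{\rm kin}(\tau) + \int_{\Q}\D\mathfrak{C}_{\rm int}(\tau)
- (\text{cross terms in }\widehat\vr,\widehat\vm),
\]
where the subtracted cross terms are the standard linearizations of the kinetic and internal energy about the strong solution, namely the pairing of $\widehat{\vu}=\widehat\vm/\widehat\vr$ with $\vm$, the term $\tfrac12|\widehat{\vu}|^2\vr$, and the pressure-potential pieces built from $\widehat\vr$. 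By construction $\mathcal{E}\geq 0$ and $\mathcal{E}=0$ forces $\nu_x(\tau)=\delta_{[\widehat\vr,\widehat\vm]}$ together with the vanishing of all concentration defects, which is exactly the conclusion we want.

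Next I would derive the relative energy inequality. Starting from the energy inequality \eqref{W6} (in integrated form, giving $E(\tau)\le E_0$), I would subtract the time-evolution of the cross terms. Those cross terms are computed by using the strong solution's smooth quantities $\widehat\vr,\widehat\vm,\widehat{\vu}$ as test functions in the weak continuity equation \eqref{W4} and the weak momentum equation \eqref{W5}; this is legitimate since $\widehat\vr,\widehat\vm\in W^{1,\infty}$ and one approximates by $C^1_c$ test functions. Crucially, the concentration-defect contributions in \eqref{W5} — the terms $2\int\langle r_x;\xi\otimes\xi\rangle:\Grad\bfphi\,\D\mathfrak{C}_{\rm kin}$ and $(\gamma-1)\int\Div\bfphi\,\D\mathfrak{C}_{\rm int}$ — must be retained and carefully matched against the defect terms $\int\D\mathfrak{C}_{\rm kin}$, $\int\D\mathfrak{C}_{\rm int}$ appearing in the energy identity \eqref{D4}. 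Here the compatibility relations \eqref{D5} are essential: they guarantee that the convective and pressure defects are controlled by the kinetic and internal defects with the correct constants, so that after assembling all pieces the defect terms combine into a nonnegative quantity that can be absorbed into the left-hand side rather than obstructing the estimate. Because $\widehat{\vu}$ is smooth, $\xi\otimes\xi$ is a probability average of unit tensors, and $\tfrac{a}{\gamma-1}\tvr^\gamma$ is convex, Jensen's inequality under $\nu_x$ and $r_x$ renders every defect contribution of a definite sign.

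The outcome should be a Gronwall-type inequality
\[
\mathcal{E}(\tau) \;\le\; \mathcal{E}(0) + C\!\int_0^\tau \|\nabla_x\widehat{\vu}\|_{L^\infty}\,\mathcal{E}(t)\,\dt,
\]
where all remainder integrands are estimated pointwise by the relative-energy density — using convexity of $\vr\mapsto\vr^\gamma$ and of the kinetic energy in $[\vr,\vm]$ to bound the non-quadratic terms, and using $\inf_{[0,\tau]\times\Q}\widehat\vr>0$ to keep denominators harmless on any compact subinterval of $[0,T_{\rm max})$. The matching initial data give $\mathcal{E}(0)=E_0 - \int_{\Q}[\tfrac12|\widehat\vm_0|^2/\widehat\vr_0 + \tfrac{a}{\gamma-1}\widehat\vr_0^\gamma]\dx = 0$ by the hypothesis on $E_0$, so Gronwall yields $\mathcal{E}(\tau)=0$ for all $\tau\in[0,T_{\rm max})$, whence $\vr=\widehat\vr$, $\vm=\widehat\vm$, the Young measure is a Dirac mass, the defects vanish, and therefore $E(\tau)=E_0$.

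I expect the main obstacle to be the bookkeeping in step two: correctly identifying which cross terms to subtract so that the singular defect measures — which enter the momentum equation through \eqref{W5} but only enter the energy through \eqref{D4} — cancel or acquire a favorable sign. This is where the precise form of the compatibility conditions \eqref{D5} (in particular the factor $2$ in front of $r_x\otimes\mathfrak{C}_{\rm kin}$ and the factor $\gamma-1$ for the pressure defect) does the real work, and where any deviation from the authors' chosen formulation would break the argument. Since the statement says the proof proceeds ``exactly as \cite{FGSWW1}'', the remaining estimates are the routine convexity bounds, and the novelty is confined to verifying that the energy-defect formulation of Definition \ref{DD1} supports the same cancellations as the recession-function formulation used there.
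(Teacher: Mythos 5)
Your proposal is correct and follows essentially the same route as the paper, which proves Proposition \ref{WP1} by the relative energy method of \cite{FGSWW1}, \cite{GSWW} as sketched in Section \ref{REI}: one derives the relative energy inequality with the concentration defects controlled via the compatibility conditions \eqref{D5} (using $|\langle r_x;\xi\otimes\xi\rangle|\le 1$ so the momentum-equation defect terms are bounded by $\|\Grad\widehat{\vu}\|_{L^\infty}$ times the defect masses contained in the relative energy, and absorbed by Gronwall rather than being literally sign-definite), and concludes from $\mathcal{E}(0)=0$ and positivity of $\widehat{\vr}$ on compact subintervals of $[0,T_{\rm max})$. Apart from that minor phrasing about sign-definiteness, your bookkeeping of the cross terms, the role of the factors $2$ and $\gamma-1$ in \eqref{D5}, and the final Gronwall step match the paper's argument.
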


\subsection{Admissible dissipative solutions}

Finally, we introduce a subclass of dissipative solutions that reflect the physical principle of maximization of the energy dissipation. To this end, let $[\vr^i, \vm^i, E^i]$, $i=1,2$, be two dissipative solutions starting from the same initial data $[\vr_0, \vm_0, E_0]$. We introduce the relation
\begin{equation*} %\label{D6}
[\vr^1, \vm^1, E^1] \prec [\vr^2, \vm^2, E^2]\ \Leftrightarrow \ E^{1}(\tau \pm) \leq E^{2}(\tau \pm)
\ \mbox{for any}\ \tau \in (0, \infty).
\end{equation*}

\begin{Definition}[Admissible dissipative solution] \label{DD2}

We say that a dissipative solution $[\vr, \vm, E]$ starting from the initial data $[\vr_0, \vm_0, E_0]$ is \emph{admissible} if
it is minimal with respect to the relation $\prec$. Specifically, if
\[
[\tvr, \tvm, \tilde{E}] \prec [\vr, \vm, E],
\]
where $[\tvr, \tvm, \tilde{E}]$ is another dissipative solution starting from $[\vr_0, \vm_0, E_0]$, then
\[
E = \tilde E  \ \mbox{in}\ [0, \infty).
\]
\end{Definition}
{Maximizing the energy dissipation or, equivalently, minimizing the total energy of the system is motivated by a similar selection criterion proposed by Dafermos \cite{Daf4}. In view of the arguments discussed in Section \ref{C}, such a selection criterion:
\begin{itemize}
\item rules out a large part of wild solutions obtained via ``available'' methods;
\item guarantees stability of equilibrium states in the class of dissipative solutions.
\end{itemize}

\subsection{Semiflow selection -- main result}
\label{S}

We start by introducing suitable topologies on the space of the initial data and the space of dissipative solutions.
Fix  $\ell>N/2+1$ and consider the Hilbert space
\[
X = W^{-\ell,2}(\Q) \times W^{-\ell,2}(\Q; R^N) \times R,
\]
together with its subset containing the initial data
\begin{equation*}% \label{S1}
D = \left\{ [\vr_0, \vm_0, E_0]\in X\ \Big| \ \vr_0 \geq 0,\
\intQ{ \left[ \frac{1}{2} \frac{|\vm_0|^2}{\vr_0} + \frac{a}{\gamma - 1} \vr_0^\gamma \right]} \leq E_0 \right\}.
\end{equation*}
Here the convex function $[\vr, \vm] \mapsto \frac{|\vm|^2}{\vr}$ is defined for $\vr \geq 0$, $\vm \in R^N$ as
\[
\frac{|\vm|^2}{\vr} = \left\{ \begin{array}{l} 0 \ \mbox{if} \ \vm = 0, \\
\frac{|\vm|^2}{\vr} \ \mbox{if} \ \vr > 0,\\
\infty \ \mbox{otherwise.} \end{array} \right.
\]
Note that  $D$ is a closed convex subset of $X$.
We consider the trajectory space
\[
\Omega = C_{{\rm loc}} ([0, \infty); W^{-\ell,2}(\Q)) \times
C_{{\rm loc}} ([0, \infty); W^{-\ell,2}(\Q; R^N)) \times L^1_{\rm loc}(0, \infty),
\]
which is a separable metric space. Dissipative solutions $[\vr, \vm, E]$, as defined in Definition~\ref{DD1}, belong to this class. Indeed,  equations \eqref{W4} and \eqref{W5} give an information on the time regularity of the density and the momentum whereas the energy can be controlled by \eqref{W6}. Moreover, for initial data $[\vr_0, \vm_0, E_0] \in D$ it follows from \eqref{D4} and Jensen's inequality that a dissipative solution $[\vr,\vm,E]$ evaluated at a.a.  times $t\geq0$ also belongs to the set $D$.
Finally, for initial data $[\vr_0, \vm_0, E_0] \in D$, we introduce the solution set
\begin{equation*} %\label{S2}
\begin{split}
\mathcal{U} &[\vr_0, \vm_0, E_0] =
\left\{ [\vr, \vm, E] \in \Omega \ \Big| \begin{array}{c}
[\vr, \vm, E] \ \mbox{is a dissipative solution}\\ \mbox{with initial data}\ [\vr_0, \vm_0, E_0]\end{array} \right\}.
\end{split}
\end{equation*}
We are now ready to define a semiflow selection to \eqref{I1}.
\begin{Definition}[Semiflow selection] \label{SD1}

A \emph{semiflow selection} in the class of dissipative solutions for the compressible Euler system \eqref{I1} is a mapping
\[
U: D \to \Omega,\
U\left\{ \vr_0, \vm_0, E_0 \right\} \in \mathcal{U} [\vr_0, \vm_0, E_0] \ \mbox{for any}\
[\vr_0, \vm_0, E_0] \in D
\]
enjoying the following properties:
\begin{itemize}
\item[a)]
{\bf Measurability.} The mapping $U: D \to \Omega$ is Borel measurable.
\item[b)]
{\bf Semigroup property.} We have
\[
U \left\{ \vr_0, \vm_0, E_0 \right\} (t_1 + t_2) =
U \left\{ \vr(t_1), \vm(t_1), E(t_1-) \right\}(t_2)
\]
for any $[\vr_0, \vm_0, E_0] \in D$ and any $t_1, t_2\geq 0$, where
$[\vr, \vm, E] = U \left\{ \vr_0, \vm_0, E_0 \right\}$.

\end{itemize}

\end{Definition}

Our main result reads as follows.

\begin{Theorem} \label{ST1}

The isentropic Euler system \eqref{I1} admits a semiflow selection
$U$ in the class of dissipative solutions in the sense of Definition \ref{SD1}. Moreover, we have that
\[
U\left\{ \vr_0, \vm_0, E_0 \right\} \ \mbox{is admissible in the sense of Definition \ref{DD2}}
\]
for any $[\vr_0, \vm_0, E_0] \in D$.

\end{Theorem}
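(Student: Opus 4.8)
The plan is to reduce Theorem \ref{ST1} to an abstract semiflow selection principle of the type developed in the Markov selection literature (Stroock and Varadhan \cite{StrVar}, Flandoli and Romito \cite{FlaRom}, \cite{BFH18markov}) and transferred to the deterministic setting by Cardona and Kapitanski \cite{CorKap}; the concrete form we need is the one set up in Section \ref{A}. That principle asserts: if the solution multifunction $[\vr_0,\vm_0,E_0] \mapsto \mathcal{U}[\vr_0,\vm_0,E_0]$ takes nonempty compact values in $\Omega$, is Borel measurable, and enjoys a \emph{shift} and a \emph{continuation} property, then it admits a single-valued Borel measurable selection satisfying the semigroup property of Definition \ref{SD1}, and the selection may be arranged so as to minimize any prescribed countable family of lower semicontinuous functionals. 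My first task is therefore to verify these hypotheses, the analytic core of which is already contained in the existence and stability theory of Section \ref{ES}.

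First I would record the structural properties. Nonemptiness is precisely the existence result for dissipative solutions, while compactness of $\mathcal{U}[\vr_0,\vm_0,E_0]$ in $\Omega$ follows from the a priori energy bound \eqref{W6} together with the sequential closedness of the dissipative class under the weak topology of $\Omega$, which is the content of the stability statement in Section \ref{ES}. For the \emph{shift} property I would take a dissipative solution $[\vr,\vm,E]$ together with its associated parametrized measures, restrict all objects to $[\tau,\infty)$ and translate in time; the triple $[\vr(\tau+\cdot),\vm(\tau+\cdot),E(\tau+\cdot)]$ is then a dissipative solution issued from $[\vr(\tau),\vm(\tau),E(\tau-)]$, the one-sided limit $E(\tau-)$ serving as the new initial energy. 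For the \emph{continuation} property I would glue a solution defined on $[0,\tau]$ to a solution issued from the state reached at time $\tau$; the weak continuity of $\vr,\vm$ and the variational form \eqref{W6} of the energy inequality ensure that the concatenated object is again a dissipative solution in the sense of Definition \ref{DD1}.

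The selection itself is then performed by iterated minimization. I would fix a countable family of Laplace-type functionals
\[
I_{\lambda,k}[\vr,\vm,E] = \int_0^\infty e^{-\lambda t}\, \langle\, [\vr,\vm,E](t)\, ;\, \beta_k\,\rangle \ \dt,
\]
with $\lambda>0$ rational and $\{\beta_k\}$ a countable set of bounded continuous functionals separating points of the phase space. Minimizing successively over this list refines $\mathcal{U}$ to a decreasing chain of multifunctions; the key lemma, deduced from the shift and continuation properties, is that each such minimization preserves nonemptiness, compactness, Borel measurability, shift-invariance and continuation, the measurability being propagated at each step by the measurable projection and selection theorems. The countable intersection is single-valued and Borel measurable, which defines $U$. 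The semigroup property \eqref{I6} is then the decisive point and follows from the dynamic programming (Bellman) structure of the Laplace functionals: for each admissible $\lambda,k$,
\[
\min_{\mathcal{U}[\vr_0,\vm_0,E_0]} I_{\lambda,k} = \min_{\mathcal{U}[\vr_0,\vm_0,E_0]} \left[ \int_0^{t_1} e^{-\lambda t}\langle\,\cdot\,;\beta_k\rangle\,\dt + e^{-\lambda t_1}\, \min_{\mathcal{U}[\vr(t_1),\vm(t_1),E(t_1-)]} I_{\lambda,k} \right],
\]
so that the time shift by $t_1$ of the selected trajectory is itself a minimizer of every $I_{\lambda,k}$ for the problem started at $[\vr(t_1),\vm(t_1),E(t_1-)]$; since the refined multifunction is single-valued, the shift must coincide with $U\{\vr(t_1),\vm(t_1),E(t_1-)\}$, which is exactly \eqref{I6}.

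Admissibility is then automatic provided the energy functionals $[\vr,\vm,E]\mapsto \int_0^\infty e^{-\lambda t} E(t)\,\dt$ are placed at the head of the minimization list: the selected $E$ is thereby minimal in the sense that no competing dissipative solution from the same data admits a smaller energy profile, and by the $BV_{\mathrm{loc}}$ regularity of $E$ this transfers to the one-sided values $E(\tau\pm)$, yielding minimality with respect to $\prec$, i.e. Definition \ref{DD2}. I expect the continuation property to be the main obstacle: one must check that the glued trajectory carries a genuine family of Young and concentration defect measures satisfying the compatibility relations \eqref{D5}, with the correct matching of the one-sided energy limits $E(\tau\pm)$ across the junction, so that it is a bona fide dissipative solution rather than merely a weak limit. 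A secondary difficulty is the measurability of a multifunction valued in the non-locally-compact trajectory space $\Omega$, which forces one to rely on the compactness of individual solution sets rather than of the state space itself.
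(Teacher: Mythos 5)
Your proposal is correct and follows essentially the same route as the paper: verify the Krylov--Cardona--Kapitanski properties \ref{A1}--\ref{A4} via the existence/stability results of Section \ref{ES} and Lemmas \ref{AL2}, \ref{AL3}, iterate the minimization of Laplace-type functionals (Proposition \ref{AP1}), intersect, and obtain the semigroup property from shift invariance of the single-valued limit, with admissibility secured by placing the energy functional first as in Lemma \ref{CL2}. The only points you leave implicit --- that all elements of $\mathcal{U}^\infty$ share the same values of every functional, so that uniqueness of the Laplace transform (Lerch's theorem, using the dense set $\{\lambda_k\}$) together with the separating families $\{e_n\}$, $\{\vc{w}_m\}$ forces a singleton, and that the energy must enter through a bounded strictly increasing $\beta$ rather than $E$ itself so that $F$ remains bounded continuous on $X$ --- are exactly the details the paper supplies.
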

In the next section we prove the existence of at least one dissipative solution for given initial data and the sequential stability of the solution set.
The abstract setting for the selection principle is presented in Section~\ref{A} and the proof of Theorem~\ref{ST1} can be found in Section~\ref{SSS}.
The additional regularity properties of the selection mentioned in Section \ref{I} will be discussed in Section~\ref{C}.

\section{Existence and sequential stability}
\label{ES}

We aim to show:
\begin{itemize}
\item \emph{existence} of a dissipative solution for any initial data $[\vr_0, \vm_0, E_0] \in D$, meaning
\[
\mathcal{U}[\vr_0, \vm_0, E_0] \ne \emptyset;
\]
\item \emph{sequential stability} of the solution set, meaning
\[
\mathcal{U}[\vr_0, \vm_0, E_0] \subset \Omega \ \mbox{is compact}
\]
and the multivalued mapping
\[
[\vr_0, \vm_0, E_0] \in D \subset X \to \mathcal{U}[\vr_0, \vm_0, E_0] \in 2^\Omega
\]
has closed graph; whence by Lemma 12.1.8 in \cite{StrVar} it is (strongly) Borel measurable.

\end{itemize}

We note that if $\mathcal{U}[\vr_0, \vm_0, E_0]$ is a compact subset of the separable metric space $\Omega$
for any $[\vr_0, \vm_0, E_0] \in D$, then the (Borel) measurability of the multivalued mapping
\[
\mathcal{U}: D \to 2^\Omega
\]
corresponds to measurability with respect to the Hausdorff metric on the space of all compact subsets of $\Omega$.

\subsection{Sequential stability}

We first address the issue of sequential stability as the existence proof leans basically on identical arguments.

\begin{Proposition} \label{ESP2}

Suppose that $\{ \vr_{0,\ep}, \vm_{0,\ep}, E_{0,\ep} \}_{\ep > 0} \subset D$ is a sequence of data giving rise to a family of dissipative solutions
$\{ \vre, \vme, E_\ep \}_{\ep > 0}$, that is,
$
[\vre, \vme, E_\ep] \in \mathcal{U}[ \vr_{0,\ep}, \vm_{0,\ep},E_{0,\ep} ].
$
Moreover, we assume that there  exists $\Ov{E}>0$ such that $  E_{0,\ep} \leq \Ov{E}$ for all $\ep > 0$.

Then, at least for suitable subsequences,
\begin{equation} \label{ES1}
\vr_{0,\ep} \to \vr_0 \ \mbox{weakly in}\ L^\gamma(T^N), \
\vm_{0,\ep} \to \vm_0 \ \mbox{weakly in}\ L^{\frac{2 \gamma}{\gamma + 1}}(T^N; R^N)),\
E_{0,\ep} \to E_0.
\end{equation}
and
\[
\begin{split}
\vre &\to \vr \ \mbox{in}\ C_{{\rm weak, loc}}([0, \infty); L^\gamma (T^N)),\\
\vme &\to \vm \ \mbox{in}\ C_{{\rm weak, loc}}([0, \infty); L^\gamma (T^N ; R^N)),\\
E_\ep(\tau) &\to E(\tau) \ \mbox{for any}\ \tau \in [0, \infty) \ \mbox{and in}\ L^{1}_{\rm loc}(0,\infty),
\end{split}
\]
where
\[
[\vr, \vm, E] \in \mathcal{U} [\vr_0, \vm_0, E_0].
\]

\end{Proposition}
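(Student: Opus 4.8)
The plan is to prove sequential stability by extracting convergent subsequences from the uniform energy bound and then passing to the limit in each item (a)--(d) of Definition~\ref{DD1}. First I would establish the \emph{a priori} bounds. Since $E_{0,\ep}\le\Ov{E}$ and the energy inequality \eqref{W6} forces $E_\ep$ to be non-increasing, we get $E_\ep(\tau)\le\Ov{E}$ for all $\tau$, uniformly in $\ep$. Through the energy representation \eqref{D4}, this controls $\intQ{\va{\nu_x^\ep(\tau);\frac12|\tvm|^2/\tvr+\frac{a}{\gamma-1}\tvr^\gamma}}$ together with the total masses of all four concentration defect measures, uniformly in $\tau$ and $\ep$. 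In particular $\vre(\tau)$ is bounded in $L^\gamma(\Q)$ and $\vme(\tau)$ is bounded in $L^{2\gamma/(\gamma+1)}(\Q;R^N)$ for a.a.\ $\tau$, uniformly in $\ep$.

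Next I would organize the compactness arguments. For the energy sequence: $E_\ep$ is uniformly bounded, monotone non-increasing, hence of uniformly bounded variation on compact time intervals; by Helly's selection theorem we extract a pointwise limit $E(\tau)$ for every $\tau$, which is again non-increasing, and this convergence upgrades to $L^1_{\rm loc}(0,\infty)$. For the parametrized measures, the uniform mass bounds let me invoke the fundamental theorem on Young measures to obtain a limit Young measure $\nu_x(t)$, while the defect measures $\mathfrak{C}_{\rm kin}^\ep,\mathfrak{C}_{\rm int}^\ep,\mathfrak{C}_{\rm conv}^\ep,\mathfrak{C}_{\rm press}^\ep$, being uniformly bounded in the relevant $L^\infty_{\rm weak-(*)}$ spaces of measures, converge weakly-$(*)$ along a subsequence to limits retaining the compatibility relations \eqref{D5} (the constraints are weak-$(*)$ closed). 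For the density and momentum themselves I would use the equations \eqref{W4}, \eqref{W5}: the right-hand sides are bounded in the dual of a fixed test space, so $t\mapsto\intQ{\vre\varphi}$ and $t\mapsto\intQ{\vme\cdot\bfphi}$ are uniformly equicontinuous; combined with the spatial $L^\gamma$, $L^{2\gamma/(\gamma+1)}$ bounds and an Arzel\`a--Ascoli argument in $C_{\rm weak,loc}$, I obtain $\vre\to\vr$ in $C_{\rm weak,loc}([0,\infty);L^\gamma(\Q))$ and $\vme\to\vm$ in $C_{\rm weak,loc}([0,\infty);L^{2\gamma/(\gamma+1)}(\Q;R^N))$. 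The initial data convergence \eqref{ES1} follows from the same bounds applied at $\tau=0$.

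I would then pass to the limit in the defining relations. The linear identity \eqref{W4} passes immediately since $\vre\to\vr$ and $\vme\to\vm$ weakly suffice for the linear terms. The energy inequality \eqref{W6} is preserved because the map $E\mapsto[E\psi]-\int E\partial_t\psi$ respects the pointwise-plus-$L^1_{\rm loc}$ convergence and the sign of the inequality. The momentum identity \eqref{W5} is the delicate one: here I would use that all nonlinear quantities --- $1_{\tvr>0}\tvm\otimes\tvm/\tvr$, $a\tvr^\gamma$, and the defect contributions $\va{r_x;\xi\otimes\xi}$ against $\mathfrak{C}_{\rm kin}$ and $\mathfrak{C}_{\rm int}$ --- are represented through the Young measure and the defect measures rather than through $\vre,\vme$ directly, so their limits are captured by the joint weak-$(*)$ convergence of $(\nu^\ep,\mathfrak{C}^\ep)$ established above, against the fixed smooth integrands $\Grad\bfphi$, $\Div\bfphi$. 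Finally, the energy representation \eqref{D4} passes to the limit at a.a.\ $\tau$ by combining the Young-measure convergence with the weak-$(*)$ limits of $\mathfrak{C}_{\rm kin}^\ep$ and $\mathfrak{C}_{\rm int}^\ep$.

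The main obstacle I anticipate is precisely the lower-semicontinuity bookkeeping in the limit of \eqref{D4}: one must ensure that no energy is \emph{lost} in the passage to the limit, i.e.\ that the limiting Young measure plus defect measures exactly recover the limit energy $E(\tau)$ rather than merely bounding it. This is where the deliberate choice of \emph{energy}-based concentration defects in Definition~\ref{DD1} pays off --- the defect measures are defined to absorb exactly the energy gap, so the representation is an equality by construction, and the weak-$(*)$ limits of equalities remain equalities. The only care needed is a null-set argument to select a common set of full measure in time on which \eqref{D4} holds for the limit, and to reconcile the a.a.-$\tau$ representation with the everywhere-defined monotone limit $E$ obtained from Helly; this is handled by the $BV_{\rm loc}$ regularity of $E$ together with its one-sided limits, exactly as recorded in the function-space assignment of Definition~\ref{DD1}. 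With all four conditions verified for the limit triple and the limiting family of parametrized measures, we conclude $[\vr,\vm,E]\in\mathcal{U}[\vr_0,\vm_0,E_0]$, which simultaneously yields compactness of $\mathcal{U}[\vr_0,\vm_0,E_0]$ and the closed-graph property of the multivalued map.
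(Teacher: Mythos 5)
Your overall architecture (uniform bounds from the energy, Helly for $E_\ep$, equicontinuity from the equations plus Arzel\`a--Ascoli for $\vre,\vme$ in $C_{\rm weak,loc}$, direct limit passage in the linear identities \eqref{W4}, \eqref{W6}) matches the paper. But there is a genuine gap at the crux of the argument, namely your treatment of the nonlinear terms in \eqref{W5} and of the energy representation \eqref{D4}. You assert that since $1_{\tvr>0}\tvm\otimes\tvm/\tvr$ and $a\tvr^\gamma$ ``are represented through the Young measure,'' their limits ``are captured by the joint weak-$(*)$ convergence of $(\nu^\ep,\mathfrak{C}^\ep)$.'' This does not follow: the weak-$(*)$ convergence $\nu^\ep_x(t)\to\nu_x(t)$ only permits passage to the limit against \emph{bounded} continuous integrands, whereas these nonlinearities are unbounded on the phase space $\mathcal{S}$, and the only uniform control available is on the first moments $\langle\nu^\ep_x(t);\frac12|\tvm|^2/\tvr+\frac{a}{\gamma-1}\tvr^\gamma\rangle$ --- exactly the growth rate of the integrands. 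There is no uniform integrability, so mass can escape to infinity and, in general, $\langle\nu^\ep_x(t);\tvm\otimes\tvm/\tvr\rangle$ does \emph{not} converge to $\langle\nu_x(t);\tvm\otimes\tvm/\tvr\rangle$: a \emph{new} concentration defect is created at the level of the Young measures themselves. For the same reason your closing claim that ``the representation \eqref{D4} is an equality by construction, and the weak-$(*)$ limits of equalities remain equalities'' begs the question: the limit of the Young-measure term on the left is not the Young-measure term of the limit.

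This is precisely where the paper spends its main technical effort, and your proposal omits it. The paper introduces cut-off functions $\chi_k,\psi_k$, splits the convective and pressure terms into a truncated part (where the integrand is bounded, so \eqref{eq:2} applies, followed by dominated convergence as $k\to\infty$) and a remainder defining auxiliary measures $\mathfrak{C}^{\ep,k,2}_{\rm conv}$, $\mathfrak{C}^{\ep,k,2}_{\rm press}$, takes the double limit $\ep\to0$, $k\to\infty$, and only then assembles the final defects as \emph{sums} $\mathfrak{C}_{\rm kin}=\mathfrak{C}^{\infty,1}_{\rm kin}+\mathfrak{C}^{\infty,2}_{\rm kin}$, $\mathfrak{C}_{\rm int}=\mathfrak{C}^{\infty,1}_{\rm int}+\frac{1}{\gamma-1}\mathfrak{C}^{\infty,2}_{\rm press}$, where the first summands are the weak-$(*)$ limits of the old defects (the only ones your proof retains) and the second summands record the Young-measure concentrations. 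A related, smaller omission: the compatibility condition \eqref{D5} for the limit is not simply ``weak-$(*)$ closed'' as you state; what survives the limit is the marginal identity $2\mathfrak{C}^{\infty,1}_{\rm kin}(t,\dif x)=\mathfrak{C}^{\infty,1}_{\rm conv}(t,\dif x,S^{N-1})$ (and its analogue for the $\mathfrak{C}^{\infty,2}$ family), from which the disintegration $\mathfrak{C}_{\rm conv}=2r_x(t)\otimes\mathfrak{C}_{\rm kin}$ with new measures $r_x(t)\in\mathcal{P}(S^{N-1})$ must be \emph{reconstructed} a posteriori; the fibers $r^\ep_x$ do not themselves pass to the limit. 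Without the $\mathfrak{C}^{\infty,2}$ construction, your limit triple would satisfy neither \eqref{W5} nor the equality in \eqref{D4}, so the proof as proposed does not close.
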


\begin{proof}

We proceed via several steps.

\begin{itemize}

\item

First of all, observe that the convergence \eqref{ES1} follows immediately from the fact that the energy $E_{0,\ep}$ is bounded
uniformly for $\ep \to 0$.

\item

It follows from Jensen's inequality that
\begin{equation}\label{eq:J1}
\frac{1}{2} \frac{|\vme|^2}{\vre} + \frac{a}{\gamma - 1} \vre^\gamma
\leq \left< \nu^\ep_x (t) ; \frac{1}{2} \frac{|\tvm|^2}{\tvr} + \frac{a}{\gamma - 1} \tvr^\gamma \right> \ \mbox{a.a. in}\
(0, \infty) \times \Q,
\end{equation}
where $\nu^{\ep}$ is the Young measure associated with the solution $[\vr_{\ep},\vm_{\ep},E_{\ep}]$.
Consequently, as $E_{0,\ep} \to E_0$, we deduce from the energy inequality \eqref{W6}, \eqref{D4}, {and the equations \eqref{W4}, \eqref{W5}}  that (up to a subsequence)
\[
\begin{split}
\vre &\to \vr \ \mbox{in}\ C_{{\rm weak, loc}}([0, \infty); L^\gamma(T^N)), \ \vr \geq 0,\\
\vme &\to \vm \ \mbox{in}\ C_{{\rm weak, loc}}([0, \infty); L^\frac{2\gamma}{\gamma+1}(T^N; R^N)),
\end{split}
\]
where
\[
\vr(0, \cdot) = \vr_0, \ \vm(0, \cdot) = \vm_0.
\]
In addition, note that, by \eqref{W6} and \eqref{D4}, the energy is non--increasing and non--negative; whence its total variation can be bounded by the initial value and the latter one is uniformly bounded by assumption. Hence by Helly's selection theorem, we have
\[
E_\ep(\tau) \to E(\tau) \ \mbox{for any}\ \tau \in [0, \infty) \ \mbox{and in}\ L^{1}_{\rm loc}(0,\infty),\ E(0+) \leq E_0.
\]

\item

In view of the above observations, it is easy to perform the limit in the equation of continuity \eqref{W4} to obtain
\[
\left[ \intQ{ \vr \varphi } \right]_{t = 0}^{t = \tau}
= \int_{0}^{\tau} \intQ{ \Big[ \vr \partial_t \varphi + \vm \cdot \Grad \varphi \Big] } \dt
\]
for any {$\varphi \in C^1_c([0, \infty) \times {T^N})$}, as well as in the energy balance \eqref{W6}: we get
\[
\begin{split}
\int_0^\infty E(t) \partial_t \psi \ \dt \geq 0 \ \mbox{for any}\ \psi \in C^1_c((0, \infty)),\ \psi \geq 0,
\ E(0+) \leq E_0,
\end{split}
\]
from which we deduce \eqref{W6}.
Moreover, we have
\[
\left[ \intQ{ \vme \cdot \bfphi } \right]_{t = 0}^{t = \tau} \to
\left[ \intQ{ \vm \cdot \bfphi } \right]_{t = 0}^{t = \tau},
\]
and
\[
\int_0^\tau \intQ{ \vme \cdot \partial_t \bfphi }\,\dif t \to
\int_0^\tau \intQ{ \vm \cdot \partial_t \bfphi }\,\dif t
\]
for any test function admissible in the momentum balance \eqref{W5}.

\item

Next, we denote by $\mathfrak{C}^{\ep}_{\rm kin}$ and $\mathfrak{C}^{\ep}_{\rm int}$ the kinetic and internal energy concentration defect measure associated with $[\vr_{\ep},\vm_{\ep},E_{
\ep}]$. Using again the energy inequality \eqref{W6} and \eqref{D4}, we deduce (up to a subsequence), the convergence of the concentration measures,
\[
\begin{split}
\mathfrak{C}^\ep_{\rm kin} &\to \mathfrak{C}^{\infty,1}_{\rm kin}\ \mbox{weakly-(*) in}\ L^\infty(0, \infty; \mathcal{M}^+(\Q)),\\
\mathfrak{C}^\ep_{\rm int} &\to \mathfrak{C}^{\infty,1}_{\rm int}\ \mbox{weakly-(*) in}\ L^\infty(0, \infty; \mathcal{M}^+(\Q)).
\end{split}
\]
In view of \eqref{D5} we denote by
\begin{equation}\label{D55}
\mathfrak{C}^{\ep,1}_{\rm conv}(t) \equiv 2r^\ep(t) \otimes \mathfrak{C}^{\ep}_{\rm kin}(t),
\end{equation}
the convective concentration defect measure associated with $[\vr_{\ep},\vm_{\ep},E_{\ep}]$ and  deduce
\begin{equation} \label{eq:1}
\mathfrak{C}^{\ep,1}_{\rm conv} \to \mathfrak{C}^{\infty,1}_{\rm conv} \ \mbox{weakly-(*) in}\ L^\infty(0, \infty;
\mathcal{M}^+(\Q \times S^{N-1})).
\end{equation}
We remark that the final convective concentration defect measure will be constructed below as a sum of $\mathfrak{C}^{\infty,1}_{\rm conv}$ and another measure obtained from the concentrations of the Young measures $\nu^{\varepsilon}_{x}(t)$.

With the above convergences at hand,  we are able to pass to the limit in the kinetic as well as internal energy concentration defect measure in \eqref{D4} and also in  the pressure concentration defect measure in \eqref{W5}.
Furthermore, we can pass to the limit in the integrals related to the convective term. More precisely, in view of \eqref{D55},
we have
\[
\begin{split}
2 &\int_{0}^{\infty} \int_{\Q}  \left< r^\ep_{x}(t); \xi \otimes \xi \right> : \Grad \bfphi  \ \mathfrak{C}^\ep_{{\rm kin}}(t,\dif x) \,\dif t
\\ &= \int_0^\infty \int_{\Q} \int_{S^{N-1}}(\xi \otimes \xi): \Grad \bfphi \  \mathfrak{C}_{\rm conv}^{\ep,1} (t,\dif x,\dif\xi)\,\dif t
\end{split}
\]
and the right hand side converges by \eqref{eq:1} to
$$
\int_0^\infty \int_{\Q} \int_{S^{N-1}} (\xi \otimes \xi): \Grad \bfphi \  \mathfrak{C}_{\rm conv}^{\infty,1} (t,\dif x,\dif \xi) \,\dif t.
$$
Finally, {we realize that} $2 \mathfrak{C}^{\infty,1}_{\mathrm{kin}}(t,\dif x)$ is the marginal of $\mathfrak{C}_{\mathrm{conv}}^{\infty,1}(t,\dif x,\dif\xi)$ corresponding to the variable $x$, that is,
\begin{equation} \label{3bis}
2 \mathfrak{C}^{\infty,1}_{\mathrm{kin}}(t,\dif x)=\mathfrak{C}_{\mathrm{conv}}^{\infty,1}(t,\dif x,S^{N-1}).
\end{equation}
Indeed, by \eqref{D55}, this is true on the approximate level and the property is preserved through the passage to the limit as $\varepsilon\to0$.

\item
%\textcolor{blue}{Ed: this is the main change, please read carefully. The point is that I am able to pass to the limit in the sequence of  Young measures applied to a bounded continuous function...}

Finally, it remains to handle the terms containing the Young measure. First, we
deduce from the energy inequality \eqref{W6} together with \eqref{D4} and \eqref{eq:J1} that the Young measures $\nu^{\varepsilon}_{x}(t)$ have uniformly bounded first moments. This  implies their (relative) compactness leading to
\begin{equation}\label{eq:2}
\nu^\ep_{x}(t)  \to \nu_{x}(t)
\ \mbox{weakly-(*) in}\ L^\infty \left((0, \infty ) \times \Q; \mathcal{P}(\mathcal{S}) \right).
\end{equation}
Note that the fact that the limit is again a (parametrized) probability measure follows from the finiteness of the first moments, see e.g.
Ball \cite{BALL2}.

Next,
let $\chi_k$ and and $\psi_k$, $k\in\mathbb{N}$, be cut--off functions satisfying
\begin{align*}
\chi_k &\in C^\infty (R),\
0 \leq \chi_k(Z) \leq 1,\\
\chi_k(Z) &= 0 \ \mbox{for}\ Z \leq k - 1,\ 0 \leq \chi_k(Z) \leq 1,\ \chi_k(Z) = 1\ \mbox{for}\ Z \geq k;\\
\psi_k &\in C^\infty(R), 0 \leq \psi_k(Z) \leq 1, \\
\psi_k(Z) &= 0\ \mbox{for}\ Z \leq 0,\ 0 \leq \psi_k(Z) \leq 1 \ \mbox{for} \ 
0 \leq Z \leq \frac{1}{k},\ \psi_k(Z) = 1 \ \mbox{for}\ Z \geq \frac{1}{k}. 
\end{align*}

We consider the two families of measures (here $b \in C(S^{N-1})$)
\[
\begin{split}
\mathfrak{C}^{\ep,k, 2}_{\rm conv} &\in L^\infty_{\rm weak-(*)}(0, \infty; \mathcal{M}^+ (T^N \times S^{N-1})),\\
\int_{S^{N-1}}b(\xi)\mathfrak{C}^{\ep,k, 2}_{\rm conv}(t,\dif x,{\dif \xi}) &= \left< \nu^\ep_x(t) ; 
b\left(\frac{\tilde\vm}{|\tilde\vm|}\right) \psi_k(\widetilde{\vr}) \widetilde\chi_k \left( \frac{1}{2} \frac{ |\tilde \vm |^2 }{\tilde \vr} \right) \frac{1}{2} \frac{ |\tilde \vm |^2 }{\tilde \vr}  \right>
\dx\\  
&+ \left< \nu_{t,x}; b\left(\frac{\tilde\vm}{|\tilde\vm|}\right)1_{\tvr > 0} (1 - \psi_k (\tvr)) \frac{1}{2} \frac{ |\tilde \vm |^2 }{\tilde \vr} \right>   ,
\end{split}
\]
and
\[
\mathfrak{C}^{\ep,k, 2}_{\rm press} \in L^\infty_{\rm weak-(*)}(0, \infty; \mathcal{M}^+ (T^N)),\
\mathfrak{C}^{\ep,k, 2}_{\rm press}(t,\dif x) = \left< \nu^\ep_x(t) ;
\chi_k \left(  a {\tilde \vr}^\gamma  \right)  a {\tilde \vr}^\gamma  \right> \dx.
\]
Due to \eqref{W6} and \eqref{D4} they are bounded uniformly in $\varepsilon,k$ and hence passing to the limit, first for $\ep \to 0$ then $k \to \infty$ we obtain
\begin{equation}\label{eq:J2}
\mathfrak{C}^{\ep,k, 2}_{\rm conv} \to \mathfrak{C}^{\infty, 2}_{\rm conv} \
\mbox{weakly-(*) in}\ L^\infty_{\rm weak-(*)}(0, \infty; \mathcal{M}^+ (T^N \times S^{N-1})),
\end{equation}
\[
\mathfrak{C}^{\ep,k, 2}_{\rm press} \to \mathfrak{C}^{\infty, 2}_{\rm press} \
\mbox{weakly-(*) in}\ L^\infty_{\rm weak-(*)}(0, \infty; \mathcal{M}^+ (T^N)).
\]
We set
\begin{equation} \label{4bis}
\mathfrak{C}^{\infty,2}_{\rm kin}(t) = \mathfrak{C}^{\infty,2}_{\rm conv}(t,\dx, S^{N-1}),\
\mathfrak{C}^{\infty,2}_{\rm int}(t) = \frac{1}{\gamma-1} \mathfrak{C}^{\infty,2}_{\rm press}.
\end{equation}

Accordingly, the convective term in the momentum equation \eqref{W5} can be decomposesd as
\[
\begin{split}
&\left< \nu^\ep_{x}(t);  1_{\tvr > 0} \frac{\tvm \otimes \tvm}{\tvr} \right> \dx \\
&=\left< \nu^\ep_{x}(t); \left( \frac{\tvm \otimes \tvm}{\tvr} \right)\psi_k(\tvr) (1 - \chi_k) \left( \frac{1}{2} \frac{|\tvm|^2}{\tvr} \right) \right> \dx\\
&+ \left< \nu^\ep_{x}(t); \left( \frac{\tvm \otimes \tvm}{\tvr} \right) 
\psi_k (\tvr)
\chi_k \left( \frac{1}{2} \frac{|\tvm|^2}{\tvr} \right) \right>
\dx \\
&+\left< \nu^\ep_{x}(t); \left( \frac{\tvm \otimes \tvm}{\tvr} \right)
 1_{\tvr > 0}(1 - \psi_k(\tvr)) \right> \dx\\
&=\left< \nu^\ep_{x}(t); \left( \frac{\tvm \otimes \tvm}{\tvr} \right) \psi_k (\tvr)(1 - \chi_k) \left( \frac{1}{2} \frac{|\tvm|^2}{\tvr} \right) \right>\dx\\
&+\left< \nu^\ep_{x}(t); \left( \frac{\tvm}{|\tvm|} \otimes \frac{\tvm}{|\tvm|} \right) 
\psi_k(\tvr)
\chi_k \left( \frac{1}{2} \frac{|\tvm|^2}{\tvr} \right) \frac{1}{2} \frac{|\tvm|^2}{\tvr}\right>\dx \\
& +\left< \nu^\ep_{x}(t); 
 \left( \frac{\tvm}{|\tvm|} \otimes \frac{\tvm}{|\tvm|} \right)   
1_{\tvr > 0}(1 - \psi_k(\tvr))\frac{1}{2} \frac{|\tvm|^2}{\tvr} \right> \dx
\end{split}
\]
%\textcolor{blue}{Ed: It is exactly here, where I think the argument was not correct. The function in question  was not continuous at $\vr = \vm = 0$.}
Thus performing successively the limits $\ep \to 0$, $k \to \infty$ we obtain
\[
\left< \nu^\ep_{x}(t); \left( \frac{\tvm \otimes \tvm}{\tvr} \right) 
\psi_k(\tvr)
(1 - \chi_k) \left( \frac{1}{2} \frac{|\tvm|^2}{\tvr} \right) \right>
\dx \to \left< \nu_{x}(t); 1_{\tvr > 0}\left( \frac{\tvm \otimes \tvm}{\tvr} \right)  \right> \dx.
\]
Indeed, the passage to the limit as $\varepsilon\to0$ is follows from \eqref{eq:2} since the Young measures are applied to continuous and bounded functions, whereas the passage to the limit $k\to\infty$ is a consequence of dominated convergence together with the energy inequality \eqref{W6} and \eqref{D4}.\\
On the other hand, by definition of $\mathfrak{C}^{\varepsilon,k,2}_{\rm conv}$ and \eqref{eq:J2} we obtain
\begin{align*}
&\left< \nu^\ep_{x}(t); \left( \frac{\tvm}{|\tvm|} \otimes \frac{\tvm}{|\tvm|} \right) 
\psi_k (\tvr)
\chi_k \left( \frac{1}{2} \frac{|\tvm|^2}{\tvr} \right) \frac{1}{2} \frac{|\tvm|^2}{\tvr}\right>\dx\\
&+ \left< \nu^\ep_{x}(t); 
 \left( \frac{\tvm}{|\tvm|} \otimes \frac{\tvm}{|\tvm|} \right)   
1_{\tvr > 0}(1 - \psi_k(\tvr))\frac{1}{2} \frac{|\tvm|^2}{\tvr} \right> \dx \\
 &=\int_{S^{N-1}}(\xi\otimes\xi)\mathfrak{C}^{\varepsilon,k,2}_{\rm conv}(t,\dx, \dif\xi)\to
\int_{S^{N-1}}(\xi\otimes\xi)\mathfrak{C}^{\infty,2}_{\rm conv}(t,\dx, \dif\xi).
\end{align*}
The pressure term can be handled in a similar manner.\\
Finally, we set
\[
\mathfrak{C}_{\rm kin} = \mathfrak{C}_{\rm kin}^{\infty,1} + \mathfrak{C}^{\infty,2}_{\rm kin},\
\mathfrak{C}_{\rm int} = \mathfrak{C}_{\rm int}^{\infty,1} + \mathfrak{C}_{\rm int}^{\infty,2}
= \mathfrak{C}_{\rm int}^{\infty,1} + \frac{1}{\gamma - 1} \mathfrak{C}_{\rm press}^{\infty,2}
\]
and use relations \eqref{3bis}, \eqref{4bis} to obtain, after final disintegration
\[
\mathfrak{C}_{\rm conv} = \mathfrak{C}^{ \infty,1}_{\rm conv} +
\mathfrak{C}^{\infty,2}_{\rm conv} = 2 r_x(t) \otimes \left( \mathfrak{C}_{\rm kin}^{\infty,1} + \mathfrak{C}^{\infty,2}_{\rm kin} \right)
\]
for some measures $r_x(t) \in \mathcal{P} (S^{N-1})$.
\item Finally, we can pass to the limit in \eqref{D4}. Arguing as for the convective term we obtain
\begin{align*}
& \intQ{ \left< \nu^\varepsilon_{x}(\tau); \frac{1}{2} \frac{|\tvm|^2}{\tvr} + \frac{a}{\gamma -1} \tvr^\gamma \right>}\\
&\rightarrow \intQ{ \left< \nu_{x}(\tau); \frac{1}{2} \frac{|\tvm|^2}{\tvr} + \frac{a}{\gamma -1} \tvr^\gamma \right>}
+ \int_{\Q} \ \D {\mathfrak{C}_{\rm kin}^{\infty,2}}(\tau) + \int_{\Q} \ \D \mathfrak{C}_{\rm int}^{\infty,2}(\tau)
\end{align*}
weakly-(*) in $L^\infty(0,T)$ such that
$$E(\tau) = \intQ{ \left< \nu_{x}(\tau); \frac{1}{2} \frac{|\tvm|^2}{\tvr} + \frac{a}{\gamma -1} \tvr^\gamma \right>}
+ \int_{\Q} \ \D \mathfrak{C}_{\rm kin}(\tau) + \int_{\Q} \ \D \mathfrak{C}_{\rm int}(\tau).$$
\end{itemize}
The proof is hereby complete.
\end{proof}

\subsection{Existence}
The sequential stability from the previous part combined with a suitable approximation implies the existence of a dissipative solution. The precise statement is the content of the following proposition.
\begin{Proposition} \label{ESP1}

Let $[\vr_0,\vc{m}_0,E_0]\in D$ be given.
Then the isentropic Euler system \eqref{I1} admits a dissipative solution in the sense of Definition
\ref{DD1} with the initial data
$[\vr_0, \vm_0, E_0]$.

\end{Proposition}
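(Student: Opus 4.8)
The plan is to obtain dissipative solutions as vanishing--viscosity limits of weak solutions to the isentropic compressible Navier--Stokes system, recycling the compactness and limit--passage machinery already assembled in the proof of Proposition~\ref{ESP2}. Since the approximating objects are genuine Navier--Stokes solutions rather than Euler dissipative solutions, one cannot invoke Proposition~\ref{ESP2} as a black box at this stage; instead one re--runs essentially the same argument in the presence of an extra dissipative term that disappears in the limit. The construction proceeds in two sub--steps: first I would prove existence for ``nice'' data (smooth, with density bounded below), and then treat general data $[\vr_0,\vm_0,E_0]\in D$ by approximating them with nice data, producing a sequence of Euler dissipative solutions, and applying Proposition~\ref{ESP2} directly.

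For the approximation I would first regularize the data, mollifying $\vr_0,\vm_0$ and adding a small constant to the density so that $\vr_0^\delta\geq\underline{\vr}(\delta)>0$, with physical energy converging to $\intQ{\big[\tfrac12|\vm_0|^2/\vr_0+\tfrac{a}{\gamma-1}\vr_0^\gamma\big]}\leq E_0$. For each viscosity $\ep>0$ I would then solve
\begin{equation*}
\partial_t \vr_\ep + \Div \vm_\ep = 0, \qquad
\partial_t \vm_\ep + \Div\Big( \frac{\vm_\ep \otimes \vm_\ep}{\vr_\ep} \Big) + a \Grad \vr_\ep^\gamma = \ep\, \Del \vu_\ep, \quad \vu_\ep = \frac{\vm_\ep}{\vr_\ep},
\end{equation*}
(a full viscous stress tensor, and if needed an artificial pressure $\delta\vr^\beta$ removed afterwards, may be used for the standard existence theory). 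Global--in--time finite--energy weak solutions exist by the Lions--Feireisl theory and obey the energy inequality with a nonnegative viscous dissipation added on the left, so that the total energy is non--increasing and bounded by $E_{0,\ep}\leq E_0$.

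A weak solution $[\vr_\ep,\vm_\ep]$ is, on the approximate level, a dissipative solution carrying a viscous defect: the associated Young measure is the Dirac mass $\nu^\ep_x=\delta_{[\vr_\ep,\vm_\ep]}$, all concentration defects vanish, and \eqref{W5} holds up to the term $\ep\Del\vu_\ep$. The uniform bound $E_{0,\ep}\leq E_0$ then triggers exactly the compactness of Proposition~\ref{ESP2}: weak--$(*)$ limits of $\vr_\ep,\vm_\ep$ in the energy spaces, a limit Young measure $\nu_x(t)$ capturing oscillations (its probabilistic character secured by uniform first--moment bounds), the kinetic and internal defects $\mathfrak{C}_{\rm kin},\mathfrak{C}_{\rm int}$, and the convective and pressure defects extracted via the cut--offs $\chi_k,\psi_k$ and satisfying \eqref{D5}. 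The viscous term is $O(\ep)$ in the sense of distributions and drops out, so the limit $[\vr,\vm,E]$ obeys \eqref{W4}, \eqref{W5}, the energy identity \eqref{D4} and the dissipation inequality \eqref{W6}, with $E(0-)$ equal to the physical limit of $E_{0,\ep}$, which is $\leq E_0$. To match the prescribed initial energy I would finally redefine $E(0-):=E_0$; since $E_0$ is not smaller than the physical energy, this introduces only a downward jump at $t=0$, which is compatible with \eqref{W6} and does not affect conditions (a)--(c). Passing $\delta\to0$ and invoking Proposition~\ref{ESP2} (or a diagonal argument in $(\ep,\delta)$) then delivers the solution for the original data.

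The delicate point is the concentration analysis near vacuum. Where $\vr_\ep\to0$, both the kinetic energy $\tfrac12|\vm_\ep|^2/\vr_\ep$ and the convective flux $\vm_\ep\otimes\vm_\ep/\vr_\ep$ may concentrate, and one must show that the resulting defects assemble consistently into $\mathfrak{C}_{\rm conv}=2\,r_x(t)\otimes\mathfrak{C}_{\rm kin}$ and $\mathfrak{C}_{\rm press}=(\gamma-1)\mathfrak{C}_{\rm int}$. This is precisely the role played by the density and energy cut--offs $\psi_k,\chi_k$ and the marginal identities \eqref{3bis}, \eqref{4bis} in the proof of Proposition~\ref{ESP2}, so the main analytical burden is already discharged there; the residual work is to verify that the viscous approximation yields uniformly bounded first moments and defect masses, and that all the initial conditions survive the passage to the limit in the relevant weak topologies.
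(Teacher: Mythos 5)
Your overall strategy is the same as the paper's: prepare smooth data with density bounded below, solve a viscous regularization globally in time, use the uniform energy bound to re-run the sequential-stability argument of Proposition \ref{ESP2} with the extra viscous term vanishing in the limit, and exploit the freedom in Definition \ref{DD1} that only $E(0-)=E_0$ and monotonicity are required (so assigning $E(0-):=E_0$ above the physical energy of the data is legitimate; the paper uses the same freedom, cf.\ Lemma \ref{AL1}). Your treatment of the viscous term, the Dirac Young measures on the approximate level, and the vacuum/concentration bookkeeping via $\psi_k,\chi_k$ and \eqref{3bis}, \eqref{4bis} all match the paper's proof in substance.

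The genuine gap is your choice of approximating system. You invoke the Lions--Feireisl theory for global finite-energy weak solutions of the isentropic Navier--Stokes system, but that theory is only available for $\gamma > N/2$ (in particular $\gamma > 3/2$ when $N=3$), whereas Proposition \ref{ESP1} is asserted for every $\gamma>1$ and $N=1,2,3$. For $1<\gamma\le 3/2$ in three dimensions the approximate solutions you propose are not known to exist, so your proof does not cover the stated range. Your parenthetical fallback --- keeping an artificial pressure $\delta\vr^\beta$ and ``removing it afterwards'' --- does not obviously repair this: removing it for fixed $\ep$ again requires $\gamma>N/2$, while sending $\ep,\delta\to0$ jointly in the measure-valued sense leaves an artificial pressure defect whose mass is only bounded (the energy inequality gives $\delta\intQ{\vr^\beta}\le(\beta-1)E_0$, with no smallness in $\delta$), and whose energy counterpart carries the factor $1/(\beta-1)$ rather than $1/(\gamma-1)$; this is incompatible with the exact identities \eqref{D4} and \eqref{D5} unless you prove that this defect vanishes, which the available bounds do not yield. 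The paper sidesteps all of this by regularizing with a higher-order ``multipolar'' viscosity $-\ep\Del^{2m}\vu$ as in \eqref{ES11}: by Kr\"oner--Zajaczkowski, for $m$ large this system has a unique global \emph{smooth} solution for every $\gamma>1$, with the exact energy balance \eqref{ES13}, and the only extra term in the weak momentum formulation is $\ep\int_0^\tau\intQ{\Del^m\vu_\ep\cdot\Del^m\bfphi}\dt=O(\sqrt\ep)$ by the dissipation bound --- no weak-solution existence theory and no second approximation parameter are needed. To fix your proof, replace the Navier--Stokes layer by such a parabolic higher-order regularization (or otherwise supply an existence result valid for all $\gamma>1$); the rest of your argument then goes through as written.
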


\begin{proof} We adapt the method of Kr\" oner and Zajaczkowski \cite{KrZa} adding an artificial viscosity term of higher order to
the momentum equation. First observe that the definition of $D$ implies
\[
\vr_0 \in L^\gamma(\Q), \ \vm_0 \in L^{\frac{2\gamma}{\gamma + 1}}(\Q; R^N)
\]
with the respective bounds in terms of $E_{0}$.
It is a routine matter to construct approximating sequences satisfying,
\[
\vr_{0,\ep} \to \vr_0 \ \mbox{in}\ L^\gamma(\Q),\
\vm_{0,\ep} = \vr_{0,\ep} \vu_{0,\ep} \to \vm_0 \ \mbox{in}\ L^{\frac{2\gamma}{\gamma + 1}}(\Q; R^N)
\]
and
\[
\intQ{ \left[ \frac{1}{2}\vr_{0,\ep} |\vu_{0,\ep}|^2 + \frac{a}{\gamma - 1} \vr_{0,\ep}^\gamma \right] } \to
\intQ{\left[  \frac{1}{2} \frac{ |\vm_0|^2 }{\vr_0} + \frac{a}{\gamma - 1} \vr^\gamma_0 \right]}
\]
as $\ep \to 0$, where $\vr_{0,\ep} > 0$ and the velocity $\vu_{0,\ep}$
are smooth functions.

We consider the ``multipolar fluid'' type approximation of the Euler system \eqref{I1}:
\begin{equation} \label{ES11}
\begin{split}
\partial_t \vr + \Div (\vr \vu) &= 0,\\
\partial_t (\vr \vu) + \Div (\vr \vu \otimes \vu) + a \Grad \vr^\gamma &= -\ep \Del^{2m} \vu,
\end{split}
\end{equation}
where $\varepsilon>0$, $m\in\mathbb{N}$, and  the initial data is chosen as
\begin{equation} \label{ES12}
\vr(0, \cdot) = \vr_{0,\ep},\ \vu(0, \cdot) = \vu_{0,\ep}.
\end{equation}
It is well known that for $m\in \mathbb{N}$ large enough, see e.g. \cite{KrZa},  the problem \eqref{ES11}, \eqref{ES12} admits a unique smooth solution $[\vre, \vue]$ on the time interval $(0, \infty)$. Moreover, we have the total energy balance,
\begin{equation} \label{ES13}
\frac{{\rm d}}{\dt} \intQ{ \left[ \frac{1}{2}\vr_{\ep} |\vu_{\ep}|^2 + \frac{a}{\gamma - 1} \vr_{\ep}^\gamma \right] }
+ \ep \intQ{ |\Del^m \vu_\ep|^2 } = 0.
\end{equation}

Using the arguments of the preceding section, it is easy to perform the limit $\ep \to 0$ in the sequence of approximate solutions
\[
\left\{ \vre, \vm_\ep = \vre \vue, E_\ep = \intQ{ \left[ \frac{1}{2}\vr_{\ep} |\vu_{\ep}|^2 + \frac{a}{\gamma - 1} \vr_{\ep}^\gamma \right] }\right\}_{\ep > 0}
\]
to obtain the desired dissipative solution as long as we control the artificial viscosity terms. However, this is standard as \eqref{ES13} yields
\[
\sqrt{\ep} \Del^m \vue \ \mbox{bounded in}\ L^2(0, \infty; L^2(T^N; R^N))
\ \mbox{uniformly for} \ \ep \to 0.
\]
Accordingly, the corresponding term in the weak formulation of the momentum equation $\eqref{ES11}_2$ can be handled as
\begin{align*}
\left| \ep \int_0^\tau \intQ{ \Del^{2m} \vue \cdot \bfphi } \dt \right|
&= \left| \ep \int_0^\tau \intQ{ \Del^{m} \vue \cdot \Del^m \bfphi } \dt \right|\\
&\aleq \sqrt{\ep} \sup_{t \in [0,\tau]} \| \Del^m \bfphi \|_{L^\infty(T^N; R^N)}
\end{align*}
and vanishes asymptotically.
\end{proof}

\section{Abstract setting}
\label{A}

Our goal is to adapt the abstract machinery developed by Cardona and Kapitanski \cite{CorKap} to the family
$\mathcal{U}[\vr_0, \vm_0, E_0]$, $[\vr_0, \vm_0, E_0] \in D$. The following statement is a direct consequence
of Propositions \ref{ESP2} and \ref{ESP1}.

\begin{Lemma} \label{AL1}

For any $[\vr_0, \vm_0, E_0] \in D$, the set $\mathcal{U}[\vr_0, \vm_0, E_0]$ is a non--empty, compact subset
of $\Omega$.
Moreover, $[\vr(T), \vm(T), \mathcal{E} ] \in D$ for any $T > 0$, and for arbitrary $\mathcal{E} \geq E(T+)$.

\end{Lemma}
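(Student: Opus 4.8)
The plan is to prove the two assertions in turn. Non-emptiness of $\mathcal{U}[\vr_0,\vm_0,E_0]$ is exactly Proposition~\ref{ESP1}. For compactness, since $\Omega$ is a metric space it suffices to verify sequential compactness: I would take an arbitrary sequence $\{[\vr_n,\vm_n,E_n]\} \subset \mathcal{U}[\vr_0,\vm_0,E_0]$ and apply Proposition~\ref{ESP2} to the \emph{constant} data sequence $[\vr_{0,n},\vm_{0,n},E_{0,n}]=[\vr_0,\vm_0,E_0]$, whose energies are trivially bounded by $\Ov{E}:=E_0$. Proposition~\ref{ESP2} then furnishes a subsequence converging in $\Omega$ to a dissipative solution whose initial data is again $[\vr_0,\vm_0,E_0]$, hence an element of $\mathcal{U}[\vr_0,\vm_0,E_0]$. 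This shows the solution set is sequentially compact, and therefore compact in the metric space $\Omega$.

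For the second assertion, fix $[\vr,\vm,E]\in\mathcal{U}[\vr_0,\vm_0,E_0]$, a time $T>0$, and $\mathcal{E}\geq E(T+)$. Nonnegativity $\vr(T)\geq 0$ follows because, by \eqref{D4}, $\vr(\tau)=\left<\nu_x(\tau);\tvr\right>\geq 0$ for a.a.\ $\tau$, and the nonnegative cone of $L^\gamma(T^N)$ is convex and strongly closed, hence weakly closed, so the weak continuity $\vr\in C_{\mathrm{weak,loc}}([0,\infty);L^\gamma(T^N))$ transfers this to the fixed time $T$. For the energy inequality, I would start from Jensen's inequality \eqref{eq:J1}, which applied pointwise in $x$ together with the nonnegativity of the concentration defect measures $\mathfrak{C}_{\rm kin}(\tau),\mathfrak{C}_{\rm int}(\tau)$ in \eqref{D4} yields, for a.a.\ $\tau$,
\[
\intQ{ \left[ \frac{1}{2}\frac{|\vm(\tau)|^2}{\vr(\tau)} + \frac{a}{\gamma-1}\vr(\tau)^\gamma \right] }
\leq \intQ{ \left< \nu_x(\tau); \frac{1}{2}\frac{|\tvm|^2}{\tvr} + \frac{a}{\gamma-1}\tvr^\gamma \right> }
\leq E(\tau).
\]
Choosing a sequence of such admissible times $\tau_n\downarrow T$, the weak-in-time continuity gives $\vr(\tau_n)\rightharpoonup\vr(T)$ in $L^\gamma(T^N)$ and $\vm(\tau_n)\rightharpoonup\vm(T)$ in $L^{\frac{2\gamma}{\gamma+1}}(T^N;R^N)$, while the monotonicity of $E$ from \eqref{W6} gives $E(\tau_n)\to E(T+)$. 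Since the energy functional $[\vr,\vm]\mapsto\intQ{[\frac{1}{2}|\vm|^2/\vr+\frac{a}{\gamma-1}\vr^\gamma]}$ is convex (its integrand being the convex function recalled in the definition of $D$), it is weakly lower semicontinuous, and passing to the limit gives
\[
\intQ{ \left[ \frac{1}{2}\frac{|\vm(T)|^2}{\vr(T)} + \frac{a}{\gamma-1}\vr(T)^\gamma \right] }
\leq \liminf_{n\to\infty} E(\tau_n) = E(T+) \leq \mathcal{E},
\]
which is precisely the defining inequality of $D$; together with $\vr(T)\geq 0$ this proves $[\vr(T),\vm(T),\mathcal{E}]\in D$.

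The main obstacle is this upgrade from a.a.\ $\tau$ to the fixed time $T$: the representation \eqref{D4} and the Jensen bound \eqref{eq:J1} hold only almost everywhere in time, so the bound at $T$ cannot be read off directly. The device that overcomes it is the interplay of three facts already available: the weak-in-time continuity of $\vr$ and $\vm$, the weak lower semicontinuity of the convex energy functional, and the $BV$ (hence one-sided limit) structure of $E$. In particular, approaching $T$ \emph{from the right} is what produces the sharp value $E(T+)$ rather than the larger $E(T-)$, which is exactly what is needed so that the statement holds for every $\mathcal{E}\geq E(T+)$ and thus serves as the admissible restarting datum in the semiflow construction.
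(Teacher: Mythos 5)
Your proposal is correct and follows exactly the route the paper intends: the paper states Lemma~\ref{AL1} as a direct consequence of Propositions~\ref{ESP1} (non-emptiness) and~\ref{ESP2} (compactness, applied as you do to a constant data sequence), and its justification of the ``moreover'' part is precisely the remark in Section~\ref{W} that membership in $D$ follows from \eqref{D4} and Jensen's inequality, which you correctly upgrade from a.a.\ times to the fixed time $T$ via weak-in-time continuity, weak lower semicontinuity of the convex energy functional, and the right limit $E(T+)$ of the monotone energy.
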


\subsection{Shift and continuation operations}
Two main ingredients for the construction of the semiflow are the shift invariance property
and the continuation property of the set of solutions (this corresponds to the disintegration and reconstruction property in the probabilistic setting of Markov selections).
For $\omega \in \Omega$, we define the positive shift operator
\[
S_T \circ \omega,\ S_T \circ \omega(t) = \omega(T + t),\ t \geq 0.
\]

\begin{Lemma}[Shift invariance property] \label{AL2}

Let $[\vr_0, \vm_0, E_0] \in D$ and $[\vr, \vm, E] \in \mathcal{U}[\vr_0, \vm_0, E_0]$.
Then we have
\[
S_T \circ [\vr, \vm, E] \in \mathcal{U}[\vr (T), \vm(T), \mathcal{E}]
\]
for any $T > 0$, and any $\mathcal{E} \geq E(T+)$.

\end{Lemma}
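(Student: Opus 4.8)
The plan is to take $[\vr,\vm,E]\in\mathcal{U}[\vr_0,\vm_0,E_0]$ together with its associated family of parametrized measures $\nu_x(t)$, $\mathfrak{C}_{\rm kin}(t)$, $\mathfrak{C}_{\rm int}(t)$, $\mathfrak{C}_{\rm conv}(t)$, $\mathfrak{C}_{\rm press}(t)$ and disintegration $r_x(t)$, and to verify that the time--shifted objects
\[
\vr^T(t)=\vr(T+t),\quad \vm^T(t)=\vm(T+t),\quad E^T(t)=E(T+t),
\]
together with $\nu^T_x(t)=\nu_x(T+t)$, $\mathfrak{C}^T_{\rm kin}(t)=\mathfrak{C}_{\rm kin}(T+t)$ and likewise for the remaining defect measures and for $r^T_x(t)=r_x(T+t)$, satisfy all of items (a)--(d) of Definition~\ref{DD1} with initial datum $[\vr(T),\vm(T),\mathcal{E}]$. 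Since the regularity requirements \eqref{D1}--\eqref{D3b} and the compatibility relations \eqref{D5} are all pointwise in $t$, they are inherited by the shifted family at once; moreover the representation formulas in item (a) hold for a.a.\ $\tau>0$ simply because they hold for a.a.\ of the times $T+\tau$. Finally, the fact that the new initial state lies in $D$, for every $\mathcal{E}\ge E(T+)$, is exactly what Lemma~\ref{AL1} provides.

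The main device I would use for the balance laws is their localized--in--time form. Fixing a test function and subtracting \eqref{W4} at two times $\tau_1\le\tau_2$ cancels the common contribution at $t=0$ and yields
\[
\left[ \intQ{ \vr \varphi } \right]_{t=\tau_1}^{t=\tau_2}
= \int_{\tau_1}^{\tau_2} \intQ{ \big[ \vr \partial_t \varphi + \vm \cdot \Grad \varphi \big] } \dt ,
\]
and the analogous identity for \eqref{W5}. Given a test function $\tilde\varphi\in C^1_c([0,\infty)\times\Q)$ for the shifted problem, I would choose any $\varphi\in C^1_c([0,\infty)\times\Q)$ with $\varphi(t,\cdot)=\tilde\varphi(t-T,\cdot)$ for $t\ge T$ (the values on $[0,T)$ never enter the localized identity), apply the above with $\tau_1=T$, $\tau_2=T+\tau$, and change variables $s=t-T$. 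This produces precisely \eqref{W4} for $[\vr^T,\vm^T]$ with $\vr^T(0,\cdot)=\vr(T,\cdot)$; the identical computation applied to the momentum balance, carrying the shifted measures through the convective and pressure terms, gives \eqref{W5}.

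It remains to verify the energy inequality \eqref{W6} with $E^T(0-)=\mathcal{E}$, and this is where I expect the only real subtlety. For $\tau_1>0$ it is immediate: given $\tilde\psi$, set $\psi(t)=\tilde\psi(t-T)$ and invoke \eqref{W6} for $[\vr,\vm,E]$ on $[T+\tau_1,T+\tau_2]$. The hard part is the initial endpoint $\tau_1=0$, where the prescribed value $\mathcal{E}$ must be reconciled with the trajectory value $E(T+)$. My plan is to apply \eqref{W6} on $[T+\delta,T+\tau_2]$ for $\delta>0$ and let $\delta\to0+$; since $E$ is nonincreasing and of bounded variation one has $E(T+\delta-)\to E(T+)$, leaving
\[
\Big[ E^T \psi \Big]_{t=0+}^{t=\tau_2+} - \int_0^{\tau_2} E^T \partial_t \psi \dt \le 0,
\]
with $E^T(0+)=E(T+)$. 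Because the initial value enters with a negative sign and $\psi(0)\ge0$, replacing $E(T+)$ by the larger number $\mathcal{E}\ge E(T+)$ only decreases the left--hand side, so the inequality persists with $E^T(0-)=\mathcal{E}$. This is exactly \eqref{W6} for the shifted triple, and I would conclude $S_T\circ[\vr,\vm,E]\in\mathcal{U}[\vr(T),\vm(T),\mathcal{E}]$. The crux is thus the interplay of the monotonicity of $E$ with the sign condition $\psi\ge0$, which is what permits the admissible downward jump of the energy at the restart time.
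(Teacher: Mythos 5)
Your proposal is correct and follows essentially the same route as the paper, whose own proof is just a three-line sketch of exactly this argument: shift the test functions in the integral identities, use the monotonicity of $E$ to identify the restarted energy value $E(T+)\leq \mathcal{E}$, and note that the sign condition $\psi\geq 0$ permits replacing $E(T+)$ by the larger value $\mathcal{E}$ at the initial endpoint. Your write-up merely makes explicit the details (localized-in-time identities, extension of shifted test functions, the $\delta\to 0+$ limit at $\tau_1=0$, and the appeal to Lemma~\ref{AL1} for $[\vr(T),\vm(T),\mathcal{E}]\in D$) that the paper leaves implicit.
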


\begin{proof}

Obviously, a dissipative solution on the time interval $(0, \infty)$ solves also the same problem on $(T, \infty)$ with the initial data
$[\vr(T, \cdot), \vm(T, \cdot), E(T+)]$. Moreover, the energy is non--increasing; whence
\[
\lim_{t \to T+} E(t) = E(T+) \leq \mathcal{E}.
\]
The rest follows by shifting the test functions in the integrals.
\end{proof}

For $\omega_1, \omega_2 \in \Omega$ we define the continuation operator $\omega_1 \cup_T \omega_2$ by
\[
\omega_1 \cup_T \omega_2 (\tau) = \left\{
\begin{array}{l}
\omega_1(\tau) \ \mbox{for}\ 0 \leq \tau \leq T,\\ \\
\omega_2(\tau - T) \ \mbox{for}\ \tau > T. \end{array} \right.
\]

\begin{Lemma}[Continuation property] \label{AL3}

Let $[\vr_0, \vm_0, E_0] \in D$ and
\[
[\vr^1, \vm^1, E^1] \in \mathcal{U} [\vr_0, \vm_0, E_0],
\ [\vr^2, \vm^2, E^2] \in \mathcal{U} [\vr^{1}(T), \vm^{1}(T), \mathcal{E}] \ \mbox{for some}\  \mathcal{E} \leq E^1(T-).
\]

Then
\[
[\vr^1, \vm^1, E^1] \cup_T [\vr^2, \vm^2, E^2] \in \mathcal{U} [\vr_0, \vm_0, E_0].
\]

\end{Lemma}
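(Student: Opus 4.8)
The plan is to verify directly that the glued object
$[\vr, \vm, E] := [\vr^1, \vm^1, E^1] \cup_T [\vr^2, \vm^2, E^2]$
satisfies every item of Definition \ref{DD1} with the initial data $[\vr_0, \vm_0, E_0]$. The natural parametrized measures for the glued solution are defined by concatenation in time: on $[0,T]$ we take the Young measure $\nu_x(t)$, the defect measures $\mathfrak{C}_{\rm kin}(t)$, $\mathfrak{C}_{\rm int}(t)$, etc., to be those of $[\vr^1, \vm^1, E^1]$, and for $t > T$ we take the time-shifted measures of $[\vr^2, \vm^2, E^2]$. First I would record the trajectory-space regularity: since $[\vr^1,\vm^1]$ and the shifted $[\vr^2,\vm^2]$ each lie in $C_{\rm weak,loc}$ and their traces match at $t = T$ by construction (namely $\vr^2(0)=\vr^1(T)$, $\vm^2(0)=\vm^1(T)$), the concatenation is again weakly continuous, and $E \in BV_{\rm loc}$ because each piece is $BV$ and the gluing introduces at most a single jump at $T$.

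Next I would check the two momentum/continuity identities \eqref{W4} and \eqref{W5}. For a test function $\varphi \in C^1_c([0,\infty)\times T^N)$, I would split the time integral at $T$ as $\int_0^\tau = \int_0^T + \int_T^\tau$ (for $\tau > T$; the case $\tau \le T$ is immediate since the glued solution agrees with the first piece). On each subinterval I would apply the defining identity of the respective solution, but with a subtlety: the identities \eqref{W4}, \eqref{W5} hold for compactly supported test functions, so to use them on the finite window $[0,T]$ or $[T,\tau]$ I would employ a temporal cutoff (or equivalently use the identity in its boundary-term form $[\,\cdot\,]_{t=0}^{t=\tau}$, which is already stated for arbitrary $\tau$). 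The key algebraic point is that the boundary term at $t=T$ telescopes: the terminal boundary term $\intQ{\vr^1(T)\varphi(T)}$ from the first piece cancels against the initial boundary term of the second piece because the traces coincide. After summation one recovers exactly \eqref{W4} on $[0,\tau]$, and the identical bookkeeping for the convective, pressure, and defect-measure terms yields \eqref{W5}; the compatibility relation \eqref{D5} for the glued measures holds on each piece and hence globally.

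For the energy inequality \eqref{W6} the argument is similar but here is where I expect the main obstacle to lie, since this is the only place the hypothesis $\mathcal{E} \le E^1(T-)$ is used. Writing the inequality in its variational form for $\psi \in C^1_c([0,\infty))$, $\psi \ge 0$, I would again split at $T$ and apply \eqref{W6} to each piece over the windows $[\tau_1,\tau_2]$, being careful to use the one-sided limits $E(T\pm)$ that appear in the definition. The crucial inequality to establish at the junction is that the jump of the glued energy at $T$ goes the right way: the terminal value $E^1(T-)$ of the first piece must dominate the initial value $E^2(0-) = \mathcal{E}$ of the second, which is precisely the assumption $\mathcal{E} \le E^1(T-)$. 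Because $E^1$ is non-increasing we have $E^1(T+) \le E^1(T-)$, and the second solution starts from $\mathcal{E} \le E^1(T-)$, so concatenating produces a function whose distributional derivative is the sum of two non-positive measures plus a non-positive jump $\mathcal{E} - E^1(T-) \le 0$ at $T$; thus \eqref{W6} is preserved across the gluing. The validity of \eqref{D4} at a.a.\ times for the glued solution is then automatic from the two pieces, completing the verification that $[\vr,\vm,E] \in \mathcal{U}[\vr_0,\vm_0,E_0]$.
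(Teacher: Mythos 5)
Your proposal is correct and takes essentially the same approach as the paper: the paper's one-line proof observes that the only nontrivial point is that the glued energy remains non-increasing on $(0,\infty)$, which is precisely the junction inequality $E^2(0+)\leq \mathcal{E}\leq E^1(T-)$ you isolate as the crucial step. The rest of your write-up (concatenated parametrized measures, matching traces at $t=T$, and the telescoping boundary terms in \eqref{W4}, \eqref{W5}) is the routine verification the paper leaves implicit.
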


\begin{proof}

We have only to realize that the energy of the solution $[\vr^1, \vm^1, E^1] \cup_T [\vr^2, \vm^2, E^2]$ indeed remains non--increasing
on $(0, \infty)$.
\end{proof}

\subsection{General ansatz}

Summarizing the previous part of this section and the results of Section \ref{ES}, we have shown the existence of a set--valued mapping
\[
D\ni[\vr_0, \vm_0, E_0] \mapsto \mathcal{U}[\vr_0, \vm_0, E_0] \in 2^\Omega
\]
enjoying the following properties:

\medskip

\begin{enumerate}[label={\bf (A\arabic{*})}]

\item\label{A1} Compactness: For any $[\vr_0, \vm_0, E_0] \in D$, the set $\mathcal{U}[\vr_0, \vm_0, E_0]$ is a non--empty compact
subset of $\Omega$.

\item\label{A2} Measurability: The mapping
\[
D\ni[\vr_0, \vm_0, E_0] \mapsto \mathcal{U}[\vr_0, \vm_0, E_0] \in 2^\Omega
\]
is Borel measurable, where the range of $\mathcal{U}$ is endowed with the Hausdorff metric on the subspace of compact sets in $2^\Omega$.

\item\label{A3} Shift invariance: For any
\[
[\vr, \vm, E] \in \mathcal{U}[\vr_0, \vm_0, E_0],
\]
we have
\[
S_T \circ [\vr, \vm, E] \in \mathcal{U}[\vr(T), \vm(t), E(T-)]\ \mbox{for any}\ T > 0.
\]

\item\label{A4} Continuation: If $T > 0$, and
\[
[\vr^1, \vm^1, E^1] \in \mathcal{U} [\vr_0, \vm_0, E_0],
\ [\vr^2, \vm^2, E^2] \in \mathcal{U} [\vr^1(T), \vm^1(T), E^1(T-)],
\]
then
\[
[\vr^1, \vm^1, E^1] \cup_T [\vr^2, \vm^2, E^2] \in \mathcal{U} [\vr_0, \vm_0, E_0].
\]
\end{enumerate}

The conditions \ref{A1}--\ref{A4} have been introduced in Cardona and Kapitanski \cite{CorKap}. In what follows, we will adopt
their method based on the ideas of Krylov \cite{KrylNV} and Strook and Varadhan \cite{StrVar} to select the desired solution semiflow.
We remark that the value $E(T-)$ in \ref{A3} and \ref{A4} can be replaced by
\[
\mathcal{E} = \eta E(T-) + (1 - \eta) E(T+)
\]
where $\eta \in [0,1]$ is given.

\section{Semiflow selection}
\label{SSS}

Following the general method by Krylov \cite{KrylNV}, we consider the family of functionals
\[
I_{\lambda, F}[\vr, \vm, E] = \int_0^\infty \exp( - \lambda t) F(\vr(t), \vm(t), E(t)) \, \dt ,\ \lambda > 0,
\]
where
\[
F: X = W^{-\ell,2}(\Q) \times W^{-\ell,2}(\Q; R^N) \times R\to R
\]
is a bounded and continuous functional.
Given $I_{\lambda, F}$ and a set--valued mapping $\mathcal{U}$ we define a selection mapping
$
I_{\lambda,F} \circ \mathcal{U},
$ by
\begin{equation*} %\label{N1}
\begin{split}
&I_{\lambda, F} \circ \mathcal{U}[\vr_0, \vm_0, E_0]\\
&= \left\{  [\vr, \vm, E] \in \mathcal{U}[\vr_0, \vm_0, E_0] \ \Big|
\begin{array}{c}I_{\lambda, F} [\vr, \vm, E]  \leq I_{\lambda, F}[\tvr, \tvm, \tilde{E}] \\ \ \mbox{for all}\ [\tvr, \tvm, \tilde{E}] \in \mathcal{U}[\vr_0, \vm_0, E_0] \end{array}  \right\}.
\end{split}
\end{equation*}
In other words, the selection is choosing minima of the functional $I_{\lambda,F}$. Note that a minimum exists since  $I_{\lambda,F}$ is continuous  on $\Omega$ and the set $\mathcal{U}[\vr_{0},\vm_{0},E_{0}]$ is compact in $\Omega$.
We obtain the following result for the set $I_{\lambda, F} \circ \mathcal{U}$.

\begin{Proposition} \label{AP1}

Let $\lambda > 0$ and $F$ be a bounded continuous functional on $X$.
Let
\[
\mathcal{U}: [\vr_0, \vm_0, E_0] \in D \mapsto \mathcal{U}[\vr_0, \vm_0, E_0] \in 2^\Omega
\]
be a multivalued mapping having the properties \ref{A1}--\ref{A4}.
Then the map $I_{\lambda,F} \circ \mathcal{U}$ enjoys \ref{A1}--\ref{A4} as well.

\end{Proposition}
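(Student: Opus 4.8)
The plan is to verify each of the four properties \ref{A1}--\ref{A4} for the reduced multivalued map $I_{\lambda,F}\circ\mathcal{U}$, inheriting them from the corresponding properties of $\mathcal{U}$. The key observation is that $I_{\lambda,F}\circ\mathcal{U}[\vr_0,\vm_0,E_0]$ is, by construction, the argmin set of the continuous functional $I_{\lambda,F}$ over the compact set $\mathcal{U}[\vr_0,\vm_0,E_0]$.

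For \ref{A1} (compactness), I would first note that a continuous real-valued functional attains its minimum on a non-empty compact set, so the argmin set is non-empty; and since it is the preimage of the (closed) minimal value $\{m\}$ under the continuous map $I_{\lambda,F}$ intersected with the compact set $\mathcal{U}[\vr_0,\vm_0,E_0]$, it is a closed subset of a compact set, hence compact. Establishing that $I_{\lambda,F}$ is continuous on $\Omega$ — which is asserted in the text preceding the statement — is the only analytic input here, following from the exponential weight $\exp(-\lambda t)$ making the integral well-defined and stable under the topology of $\Omega$, together with the boundedness and continuity of $F$ on $X$.

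For \ref{A3} (shift invariance) and \ref{A4} (continuation), the strategy is to show that applying the shift/continuation operation to a minimizer of $I_{\lambda,F}$ again yields a minimizer of the appropriately shifted functional. The essential computation is a change-of-variables identity relating $I_{\lambda,F}$ evaluated on $\omega$ to the value on $S_T\circ\omega$: splitting the integral at $T$ gives
\[
I_{\lambda,F}[\vr,\vm,E]=\int_0^T \exp(-\lambda t)F\,\dt + \exp(-\lambda T)\, I_{\lambda,F}[S_T\circ(\vr,\vm,E)],
\]
so that for two solutions agreeing on $[0,T]$, minimizing $I_{\lambda,F}$ over the whole set is equivalent to minimizing $I_{\lambda,F}$ over the shifted (continued) tails. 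I would combine this identity with the already-established shift property \ref{A3} and continuation property \ref{A4} of $\mathcal{U}$ itself (Lemmas \ref{AL2}, \ref{AL3}) to transport membership in the argmin set through the operations.

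The main obstacle I anticipate is \ref{A2} (measurability of the reduced map), since measurability of argmin-type selections is delicate. The natural route is to invoke a measurable selection / measurable-dependence theorem: the map $[\vr_0,\vm_0,E_0]\mapsto\mathcal{U}[\vr_0,\vm_0,E_0]$ is Borel measurable with compact values by hypothesis, and I would show that the scalar minimal value $[\vr_0,\vm_0,E_0]\mapsto \min_{\mathcal{U}[\vr_0,\vm_0,E_0]}I_{\lambda,F}$ is Borel measurable (exploiting that the minimum of a continuous functional over a Hausdorff-continuous-in-measurable-sense family of compact sets is measurable, cf. the apparatus in Stroock and Varadhan \cite{StrVar}). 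One then realizes $I_{\lambda,F}\circ\mathcal{U}$ as the intersection of $\mathcal{U}$ with the measurable set where $I_{\lambda,F}$ equals this minimal value, and appeals to the closed-graph/measurable-selection machinery (Lemma 12.1.8 in \cite{StrVar}) to conclude Borel measurability in the Hausdorff metric. The care needed is to ensure the argmin map has closed graph as a function of the initial datum, which follows from the closed graph of $\mathcal{U}$ together with the continuity of $I_{\lambda,F}$ and the lower semicontinuity of the value function.
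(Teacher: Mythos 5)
Your handling of \ref{A1}, \ref{A3} and \ref{A4} coincides with the paper's proof. For \ref{A1}, the paper likewise argues that $I_{\lambda,F}$ is continuous on $\Omega$ so that its argmin set over the non-empty compact set $\mathcal{U}[\vr_0,\vm_0,E_0]$ is non-empty and compact. For \ref{A3} and \ref{A4}, your splitting identity $I_{\lambda,F}[\omega]=\int_0^T e^{-\lambda t}F(\omega(t))\dt + e^{-\lambda T} I_{\lambda,F}[S_T\circ \omega]$, combined with competitors manufactured from Lemmas \ref{AL2} and \ref{AL3}, is exactly the paper's computation; the only detail worth making explicit is that in \ref{A4} the resulting inequality $I_{\lambda,F}([\vr^1,\vm^1,E^1]\cup_T[\vr^2,\vm^2,E^2])\leq I_{\lambda,F}[\vr^1,\vm^1,E^1]$ must be upgraded to equality using that $[\vr^1,\vm^1,E^1]$ is itself a minimizer, so that the continuation is again a minimizer.

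Your plan for \ref{A2}, however, contains a genuine gap. You ultimately rest the argument on the argmin map having closed graph in the initial datum, asserting that this ``follows from the closed graph of $\mathcal{U}$.'' But closed graph of $\mathcal{U}$ is not among the hypotheses of Proposition \ref{AP1}: properties \ref{A1}--\ref{A4} grant only Borel measurability of $\mathcal{U}$ into the space of compact subsets of $\Omega$. This matters because the proposition is applied iteratively in Section \ref{SSS} to build $\mathcal{U}^j = I_{k(j),n(j),m(j)}\circ\dots\circ I_{1,\beta}\circ\mathcal{U}$; the original solution map does have closed graph (Proposition \ref{ESP2}), but the argmin operation destroys closed graph, so your argument breaks down already at the second application. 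Concretely, if $d_n\to d$ with $\mathcal{U}[d_n]=\{z_0\}$ and $\mathcal{U}[d]=\{z_0,z_1\}$ where $I_{\lambda,F}(z_1)<I_{\lambda,F}(z_0)$, then $\mathcal{U}$ has closed graph, yet the reduced map sends $d_n\mapsto\{z_0\}$ and $d\mapsto\{z_1\}$, so its graph contains the points $(d_n,z_0)\to(d,z_0)$ while $(d,z_0)$ does not belong to it; the culprit is that closed graph gives no lower hemicontinuity of $\mathcal{U}$, so the minimal value may drop in the limit. Your fallback route through a Borel graph plus projection/selection machinery also falls short of the stated conclusion: projections of Borel sets are in general only analytic, so that route yields universal rather than Borel measurability.

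The paper avoids all of this by shifting the argmin analysis from the initial datum to the hyperspace variable: it factors $I_{\lambda,F}\circ\mathcal{U}=\mathcal{I}_{\lambda,F}\circ\mathcal{U}$, where $\mathcal{I}_{\lambda,F}[K]=\{z\in K \mid I_{\lambda,F}(z)\leq I_{\lambda,F}(\tilde z)\ \mbox{for all}\ \tilde z\in K\}$ acts on the space $\mathcal{K}$ of compact subsets of $\Omega$ equipped with the Hausdorff metric $d_H$, and proves that $\mathcal{I}_{\lambda,F}$ is continuous on $(\mathcal{K},d_H)$, using the convergence of minima \eqref{ppp1} together with a compactness/contradiction argument. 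Borel measurability of $I_{\lambda,F}\circ\mathcal{U}$ then follows at once as the composition of a Borel map with a continuous one, requiring nothing from $\mathcal{U}$ beyond \ref{A2} itself -- this is precisely what makes the package \ref{A1}--\ref{A4} stable under iteration. The key point your proposal misses is that the closedness needed for the argmin set is available at the hyperspace level, where the set $K_n$ and the candidate minimizer $z_n\in K_n$ vary simultaneously and $\min_{K_n}I_{\lambda,F}\to\min_K I_{\lambda,F}$ under $d_H$-convergence, whereas no analogous closedness holds as a function of the initial datum.
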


\begin{proof}
Apart from the proof of \ref{A2}, we follow the lines of the proof of
Cardona and Kapitanski \cite[Section 2]{CorKap}, which in turn relies on the classical approach by Krylov \cite{KrylNV} for stochastic differential equations.
As a matter of fact, Cardona and Kapitanski \cite{CorKap} consider $\Omega$ as a space of continuous functions on a separable complete metric space $X$. This is not true in our case since due to the possibility of energy sinks the energy $E$ lacks continuity. We therefore present the details of the proof also for reader's convenience.

\begin{itemize}
\item

The map $I_{\lambda, F}: \mathcal{U}[ \vr_0, \vm_0, E_0] \subset\Omega\to R $ is continuous. As the set $\mathcal{U}[ \vr_0, \vm_0, E_0]$ is non--empty and compact,
the set $I_{\lambda, F}\circ\mathcal{U}[\vr_0, \vm_0, E_0]$ is a non--empty compact subset of $\Omega$, which completes the proof of \ref{A1}.

\item
Let $d_H$ be the Hausdorff metric on the subspace $\mathcal{K} \subset 2^\Omega$ of compact sets, specifically,
\[
d_H(K_1, K_2) = \inf_{\ep \geq 0} \left\{ K_1 \subset V_\ep (K_2) \ \mbox{and} \ K_2 \subset V_\ep (K_1) \right\},\ K_1,K_2 \in \mathcal{K},
\]
where $V_\ep(A)$ denotes the $\ep$-neighborhood of a set $A$ in the topology of $\Omega$. To show Borel measurability of the multivalued mapping
\[
[\vr_0, \vm_0, E_0] \in D \mapsto I_{\lambda, F} \circ \mathcal{U}[ \vr_0, \vm_0, E_0] \in \mathcal{K} \subset 2^\Omega,
\]
it is enough to show that the mapping $\mathcal{I}_{\lambda, F}$ defined for any $K \in \mathcal{K}$ as
\[
\mathcal{I}_{\lambda, F} [K] = \left\{ z \in K \ \Big| \ I_{\lambda, F}(z) \leq I_{\lambda, F}(\tilde{z}) \ \mbox{for all}\ \tilde{z} \in K \right\} \in \mathcal{K},
\]
is continuous as a mapping on $\mathcal{K}$ endowed with the Hausdorff metric $d_H$.

Suppose
\[
K_n \stackrel{d_H}{\to} K,\ K_n, K \in \mathcal{K}.
\]
As $I_{\lambda, F}$ is continuous, we easily observe that
\begin{equation} \label{ppp1}
\min_{K_n}
I_{\lambda, F} \to \min_K I_{\lambda,F}.
\end{equation}
Consider the $\ep$-neighborhood $V_\ep (\mathcal{I}_{\lambda, F} [K])$ of the compact set $\mathcal{I}_{\lambda, F} [K]$. Our goal is to show that
\[
\mathcal{I}_{\lambda, F}[K_n] \subset
V_\ep (\mathcal{I}_{\lambda, F} [K]) \ \mbox{for all}\ n \geq n_0(\ep).
\]
Arguing by contradiction, we construct a sequence such that
\[
z_n \in K_n,\  I_{\lambda, F}(z_n) = \min_{K_n}I_{\lambda, F},\ z_n \to z \in K \setminus V_\ep (\mathcal{I}_{\lambda, F} [K]).
\]
Continuity of $I_{\lambda, F}$ yields
\[
I_{\lambda, F}(z_n) \to I_{\lambda, F}(z) > \min_K I_{\lambda, F}
\]
in contrast to \eqref{ppp1}. Interchanging the roles of $K_n$ and $K$ we get the opposite inclusion
\[
\mathcal{I}_{\lambda, F}[K] \subset
V_\ep (\mathcal{I}_{\lambda, F} [K_n]) \ \mbox{for all}\ n \geq n_0(\ep)
\]
by a similar argument. This implies that $\mathcal{I}_{\lambda,F}[K_{n}]\overset{d_{H}}{\to}\mathcal{I}_{\lambda,F}[K]$ and completes the proof of \ref{A2}.

\item
For the shift invariance let us consider some $[\vr, \vm, E]\in I_{\lambda, F}\circ\mathcal{U}[\vr_0, \vm_0, E_0]$ for some  $[\vr_0, \vm_0, E_0]\in D$. We aim to show that the shift $S_T\circ [\vr, \vm, E]$ belongs to the set $I_{\lambda, F}\circ\mathcal{U}[\vr(T), \vm(T), E(T-)]$ for $T>0$ arbitrary. Indeed, for any $[\vr^T, \vm^T, E^T]\in I_{\lambda, F}\circ\mathcal{U}[\vr(T), \vm(T), E(T-)]$ we obtain $$[\vr, \vm, E]\cup_T[\vr^T, \vm^T, E^T]\in \mathcal{U}[\vr_0, \vm_0, E_0]$$
by \ref{A4} and hence
\begin{align*}
I_{\lambda, F}(S_T\circ [\vr, \vm, E])&=\int_0^\infty e^{-\lambda t}F(S_T\circ [\vr, \vm, E](t))\dt\\&=\int_0^\infty e^{-\lambda t} F([\vr, \vm, E](T+t))\dt\\&=e^{\lambda T}\int_T^\infty e^{-\lambda t} F([\vr, \vm, E](t))\dt\\&=e^{\lambda T}\bigg(I_{\lambda, F}[\vr, \vm, E]-\int_0^T e^{-\lambda t} F([\vr, \vm, E](t))\dt\bigg)\\
&\leq e^{\lambda T}\bigg(I_{\lambda, F}([\vr, \vm, E]\cup_T[\vr^T, \vm^T, E^T])-\int_0^T e^{-\lambda t} F([\vr, \vm, E](t))\dt\bigg)\\
&=e^{\lambda T}\int_{T}^{\infty}e^{-\lambda t} F([\vr^{T},\vm^{T},E^{T}](t-T))\,\dt=I_{\lambda, F}[\vr^T, \vm^T, E^T],
\end{align*}
where the inequality follows from the fact that  $[\vr, \vm, E]$ minimizes $I_{\lambda, F}$ on $\mathcal{U}[\vr_{0},\vm_{0},E_{0}]$ by assumption. This implies that $S_T\circ [\vr, \vm, E]$ minimizes
$I_{\lambda, F}$ and consequently belongs to $I_{\lambda, F}\circ\mathcal{U}[\vr(T), \vm(T), E(T-)]$. We have shown property \ref{A3}.

\item

On the other hand, let us consider $[\vr^1, \vm^1, E^1] \in I_{\lambda, F}\circ\mathcal{U} [\vr_0, \vm_0, E_0]$
as well as $[\vr^2, \vm^2, E^2] \in I_{\lambda, F}\circ\mathcal{U} [\vr^1(T), \vm^1(T), E^1(T-)]$ where $[\vr_0, \vm_0, E_0]\in D$ and $T>0$.
We obtain for the continuation $[\vr^1, \vm^1, E^1]\cup_T [\vr^2, \vm^2, E^2]$
that
\begin{align*}
&I_{\lambda, F}([\vr^1, \vm^1, E^1]\cup_T [\vr^2, \vm^2, E^2])\\
&\quad=\int_0^T e^{-\lambda t}F([\vr^1, \vm^1, E^1](t))\dt+\int_T^\infty e^{-\lambda t} F([\vr^2, \vm^2, E^2](t-T))\dt\\
&\quad=\int_0^T e^{-\lambda t}F([\vr^1, \vm^1, E^1](t))\dt+e^{-\lambda T}I_{\lambda,F}[\vr^2, \vm^2, E^2]\\
&\quad\leq \int_0^T e^{-\lambda t}F([\vr^1, \vm^1, E^1](t))\dt+e^{-\lambda T}I_{\lambda,F}(S_T\circ[\vr^1, \vm^1, E^1])\\
&\quad=I_{\lambda, F}[\vr^1, \vm^1, E^1],
\end{align*}
where the inequality follows from the fact that  $[\vr^2, \vm^2, E^2]$ is a minimizer of $I_{\lambda,F}$ in the set $\mathcal{U} [\vr^1(T), \vm^1(T), E^1(T-)]$. As $[\vr^1, \vm^1, E^1]$ is a minimizer in $\mathcal{U} [\vr_0, \vm_0, E_0]$ and $[\vr^1, \vm^1, E^1]\cup_T [\vr^2, \vm^2, E^2]\in\mathcal{U} [\vr_0, \vm_0, E_0]$ by \ref{A4} we must have equality and $[\vr^1, \vm^1, E^1]\cup_T [\vr^2, \vm^2, E^2]$ is a minimizer too. This proves \ref{A4} for $I_{\lambda, F}\circ\mathcal{U}$ and the proof is complete.
\end{itemize}
\end{proof}

\subsection{Selection sequence}

The first step is to select only those solutions that are admissible, meaning minimal with respect to the relation
$\prec$ introduced in Definition \ref{DD2}. To this end, we consider the functional $I_{1, \beta}$ with
\[
\beta(\vr, \vm, E) = \beta (E), \ \beta: R \to R \ \mbox{smooth, bounded, and strictly increasing.}
\]

\begin{Lemma} \label{CL2}

Suppose that $[\vr, \vm, E] \in \mathcal{U}[\vr_0, \vm_0, E_0]$ satisfies
\[
\int_0^\infty \exp(-t) \beta( E(t) ) \, \dt  \leq
\int_0^\infty \exp(-t) \beta( \tilde{E}(t) ) \, \dt
\]
for any $[\tvr, \tvm, \tilde{E}] \in \mathcal{U}[\vr_0, \vm_0, E_0]$.
Then $[\vr, \vm, E]$ is $\prec$ minimal, meaning, admissible.

\end{Lemma}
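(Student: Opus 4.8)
The plan is to argue by contradiction, exploiting the strict monotonicity of $\beta$ together with the defining minimality of $[\vr,\vm,E]$ for the functional $I_{1,\beta}$. Suppose $[\vr,\vm,E]$ minimizes $\int_0^\infty e^{-t}\beta(E(t))\,\dt$ over $\mathcal{U}[\vr_0,\vm_0,E_0]$, yet $[\vr,\vm,E]$ is \emph{not} $\prec$-minimal. By Definition \ref{DD2}, the latter means there exists another dissipative solution $[\tvr,\tvm,\tilde{E}]\in\mathcal{U}[\vr_0,\vm_0,E_0]$ with
\[
[\tvr,\tvm,\tilde{E}]\prec[\vr,\vm,E],\quad\text{i.e.}\quad \tilde{E}(\tau\pm)\leq E(\tau\pm)\ \text{for all }\tau\in(0,\infty),
\]
but $E\neq\tilde{E}$ on $[0,\infty)$. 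The goal is to derive a strict inequality for the functional $I_{1,\beta}$ that contradicts the assumed minimality.

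The key step is to translate the pointwise inequality $\tilde{E}\leq E$ and the fact $E\neq\tilde E$ into a strict inequality between the integrals. First I would note that since $\beta$ is increasing, $\tilde{E}(\tau)\leq E(\tau)$ gives $\beta(\tilde{E}(\tau))\leq\beta(E(\tau))$ pointwise, hence immediately $I_{1,\beta}[\tvr,\tvm,\tilde{E}]\leq I_{1,\beta}[\vr,\vm,E]$. To upgrade this to a \emph{strict} inequality I would use that $E,\tilde E\in BV_{\mathrm{loc}}([0,\infty))$, so each has at most countably many discontinuities and both possess one-sided limits everywhere; the hypothesis $E\neq\tilde E$ together with right-continuity (or the one-sided comparison $\tilde E(\tau+)\leq E(\tau+)$) forces $\tilde{E}(\tau_0)<E(\tau_0)$ on a set of positive Lebesgue measure. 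Indeed, if $\tilde E$ and $E$ agreed Lebesgue-a.e., then by the BV structure and the comparison of one-sided limits they would agree everywhere, contradicting $E\neq\tilde E$. On that positive-measure set the strict monotonicity of $\beta$ yields $\beta(\tilde{E})<\beta(E)$, and since the weight $e^{-t}>0$, we conclude
\[
\int_0^\infty e^{-t}\beta(\tilde{E}(t))\,\dt < \int_0^\infty e^{-t}\beta(E(t))\,\dt,
\]
that is, $I_{1,\beta}[\tvr,\tvm,\tilde{E}]<I_{1,\beta}[\vr,\vm,E]$.

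This strict inequality contradicts the hypothesis that $[\vr,\vm,E]$ minimizes $I_{1,\beta}$ over $\mathcal{U}[\vr_0,\vm_0,E_0]$. Therefore no such $[\tvr,\tvm,\tilde{E}]$ can exist, and $[\vr,\vm,E]$ is $\prec$-minimal, i.e.\ admissible in the sense of Definition \ref{DD2}.

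I expect the main obstacle to be the measure-theoretic step of promoting the a.e./pointwise inequality to a \emph{strict} inequality of integrals. The subtlety is that $E\neq\tilde E$ as functions could in principle be witnessed only at jump points (a Lebesgue-null set), in which case $\beta(\tilde E)=\beta(E)$ almost everywhere and the integrals coincide. The resolution is exactly the use of $BV_{\mathrm{loc}}$ regularity and the one-sided comparison in the definition of $\prec$: the relation $\prec$ is stated in terms of both $\tau+$ and $\tau-$ limits, and for BV functions the values at all but countably many points determine the one-sided limits everywhere, so agreement a.e.\ already forces $\tilde E(\tau\pm)=E(\tau\pm)$ for every $\tau$, i.e.\ $\tilde E=E$ on $[0,\infty)$. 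Hence $E\neq\tilde E$ genuinely produces a positive-measure discrepancy, and the strict inequality goes through.
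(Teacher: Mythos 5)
Your proof is correct and takes essentially the same approach as the paper: both arguments combine the pointwise inequality $\beta(\tilde E)\le \beta(E)$ (from $\prec$ and monotonicity) with the assumed integral minimality to conclude, via strict monotonicity of $\beta$ and positivity of the weight $e^{-t}$, that $E=\tilde E$ a.e.\ on $(0,\infty)$ --- your contradiction framing is just the contrapositive of the paper's direct derivation. Your additional BV/one-sided-limit discussion correctly addresses the a.e.-versus-everywhere subtlety that the paper leaves implicit (its own proof also only concludes equality a.a.\ in $(0,\infty)$, while Definition \ref{DD2} states equality on $[0,\infty)$, so equality there must be read modulo values at the countably many jump points).
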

\begin{proof}
We proceed by contradiction.
Let $[\tvr, \tvm, \tilde{E}] \in \mathcal{U}[\vr_0, \vm_0, E_0]$ be such that
$
[\tvr, \tvm, \tilde{E}] \prec [\vr, \vm, E],
$ that is,
$
\tilde{E} \leq E
$ in $(0,\infty).$
Then we get
\[
\beta(E) \geq \beta(\tilde{E}), \ \mbox{and}\ \int_0^\infty \exp(-t) \left[ \beta(E) - \beta(\tilde{E}) \right] \dt \leq 0;
\]
whence $E = \tilde E$ a.a. in $(0,\infty)$ since $\beta$ is strictly increasing.
\end{proof}

\emph{Proof of Theorem \ref{ST1}.}
Selecting $I_{1,\beta} \circ \mathcal{U}$ from $\mathcal{U}$ we know that the new selection contains only admissible solutions
(minimal with respect to $\prec$) for any $[\vr_0, \vm_0, E_0] \in D$.

Next, we choose a countable basis $\{ e_n \}_{n=1}^\infty$ in $L^2(\Q)$ formed by trigonometric polynomials, its vector valued analogue $\{ \vc{w}_m \}_{m=1}^\infty$ in $L^2(\Q; R^N)$, and a countable set $\{\lambda_k \}_{k=1}^{\infty}$ which is dense in $(0,\infty)$. We consider a countable family of functionals,
\[
\begin{split}
I_{k,0,0}[\vr, \vm, E] &= \int_0^\infty \exp(-\lambda_{k}t) \beta(E(t)) \dt,\\
I_{k,n,0}[\vr, \vm, E] &= \int_0^\infty \exp(-\lambda_{k}t) \beta \left( \intQ{ \vr e_n } \right) \dt,\\
\ I_{k,0,m} [\vr, \vm, E] &= \int_0^\infty \exp(-\lambda_{k}t) \beta \left( \intQ{ \vm \cdot \vc{w}_m } \right) \dt,
\end{split}
\]
and let $\{(k(j),n(j),m(j))\}_{j=1}^{\infty}$ be an enumeration of all the involved combinations of indices, that is, an enumeration of the countable set $$(\mathbb{N}\times\{0\}\times\{0\})\cup (\mathbb{N}\times\mathbb{N}\times\{0\})\cup( \mathbb{N}\times\{0\}\times\mathbb{N} ).$$
We define
\[
\mathcal{U}^j = I_{k(j), n(j), m(j)} \circ \dots \circ I_{k(1),n(1),m(1)} \circ I_{1,\beta} \circ \mathcal{U}, \ j=1,2,\dots,
\]
and
\[
\mathcal{U}^\infty = \cap_{j=1}^\infty \mathcal{U}^j.
\]
By Proposition \ref{AP1} the set--valued mapping
\[
D\ni [\vr_0, \vm_0, E_0]  \mapsto \mathcal{U}^\infty [\vr_0, \vm_0, E_0]
\]
enjoys the properties \ref{A1}--\ref{A4}. Indeed, since $\mathcal{U}^{\infty}[\vr_{0},\vm_{0},E_{0}]$  is an intersection of countably many non--empty compact nested sets, it is non--empty  and compact. As it is an intersection set--valued map obtained from measurable set--valued maps,  it also measurable. The shift property \ref{A3} as well as the continuation property \ref{A4} are straightforward.

Finally, we claim that for every $[\vr_{0},\vm_{0},E_{0}]\in D$ the set $\mathcal{U}^\infty[\vr_{0},\vm_{0},E_{0}]$ is a singleton, meaning there exists a single trajectory $U\{\vr_{0},\vm_{0},E_{0}\}\in \Omega$ such that
\begin{equation}\label{eq:ss}
\mathcal{U}^\infty [\vr_0, \vm_0, E_0] = \big\{U \left\{ \vr_0, \vm_0, E_0 \right\}\big\} 
\end{equation}
for any $[\vr_0, \vm_0, E_0] \in D$, which completes the proof of Theorem \ref{ST1}. Indeed, the semigroup property follows from the definition of the shift property  \ref{A3}: for all $t_{1},t_{2}\geq 0$ it holds
$$U\{\vr_{0},\vm_{0},E_{0}\}(t_{1}+t_{2})=S_{t_{1}}\circ U\{\vr_{0},\vm_{0},E_{0}\}(t_{2})=U\{U\{\vr_{0},\vm_{0},E_{0}\}(t_{1}-)\}(t_{2}).$$
To verify \eqref{eq:ss}, we observe that
\[
\begin{split}
I_{k(j), n(j), m(j)} &[\vr^1, \vm^1, E^1] =
I_{k(j), n(j), m(j)} [\vr^2, \vm^2, E^2] \\ &\mbox{for any} \ [\vr^1, \vm^1, E^{1}], \, [\vr^2, \vm^2, E^{2}] \in
\mathcal{U}^\infty[\vr_0, \vm_0, E_0]
\end{split}
\]
for all $j = 1,2,\dots$.
This implies, by means of Lerch's theorem and the choice of $\{(k(j),n(j),m(j))\}_{j=1}^{\infty}$ that
\begin{align*}
 \beta(E^1(t))&=\beta(E^2(t)),\\
 \beta \left( \intQ{ \vr^1 e_n } \right)&=\beta \left( \intQ{ \vr^2 e_n } \right),\\
 \beta \left( \intQ{ \vm^1 \cdot \vc{w}_m } \right)&=\beta \left( \intQ{ \vm^2 \cdot \vc{w}_m } \right),
\end{align*}
for all $m\in\mathbb N$ and a.a. $t\in (0,\infty)$.
As $\beta$ is strictly increasing and $\{ e_n \}_{n=1}^\infty$ and $\{ \vc{w}_m \}_{m=1}^\infty$ form a basis in $L^2(\Q)$ and $L^2(\Q; R^N)$ resepectively we conclude
\[
\vr^1 = \vr^2,\ \vm^1 = \vm^2, \ \mbox{and}\ E^1 = E^2 \ \mbox{a.a. on}\ (0, \infty).
\]
which finishes the proof. \hfill $\Box$

\section{Concluding remarks}
\label{C}

Regularity of the constructed semiflow as well as possible dependence of the trajectories on the initial data represent  major open issues  that probably cannot be solved within the present abstract framework. In what follows, we discuss some simple observations
that may shed some light on the complexity of the problem.

\subsection{Energy profile}

The hypothetical possibility of ``energetic sinks'' - the times $T > 0$ for which
\[
\intQ{ \left[ \frac{|\vm|^2}{\vr} + \frac{a}{\gamma - 1} \vr^\gamma \right] (T, \cdot) } <
E(T+)
\]
implies the existence of solutions in the semiflow with positive jump of the initial energy:
\[
\intQ{ \left[ \frac{|\vm_0|^2}{\vr_0} + \frac{a}{\gamma - 1} \vr_0^\gamma \right]  } < E(0+).
\]
It is interesting to note that the existence proof presented in Proposition \ref{ESP1} does not provide solutions of this type.
One may be tempted to say that these are exactly the solutions obtained via the method of convex integration, however, such a conclusion
is not straightforward as shown in the next section.

\subsection{Wild weak solutions}

In the context of the recent results achieved by the method of convex integration, see \cite{Chiod}, \cite{DelSze3}, \cite{Fei2016},
some of the solutions involved in the semiflow might be the so--called wild (weak) solutions producing energy. This seems particularly relevant for the initial data of the form
\begin{equation*}% \label{CC1}
\vr_0, \ \vm_0, \ \mbox{with}\ E(0+) > \intQ{ \left[ \frac{1}{2} \frac{|\vm_0|^2}{\vr_0} + \frac{a}{\gamma - 1} \vr_0^\gamma \right]
}.
\end{equation*}
However, such a possibility seems to be ruled out by the available convex integration ansatz used in the context of compressible flow,
cf. \cite{Fei2016}.
Indeed the weak solutions are ``constructed'' with \emph{prescribed} energy profile
$e_{\rm kin}(t,x) + e_{\rm int}(t,x)$ - a given continuous function of $t$ and $x$ -
as limits of subsolutions $[\vr^s, \vm^s]$. The subsolutions
$\vr^s$, $\vm^s$ satisfy the strict inequality
\[
\left[
\frac{1}{2} \frac{|\vm^s|^2}{\vr^s} + \frac{a}{\gamma - 1} (\vr^s)^\gamma \right](t,x)
< e_{\rm kin}(t,x) + e_{\rm int}(t,x)
\ \mbox{for any}\ t > 0,\ x \in \Q.
\]
Consequently, the same method gives rise to another solution with the same initial data with a chosen energy profile
\[
\tilde{e}_{\rm kin}(t,x) + \tilde{e}_{\rm int}(t,x) < e_{\rm kin}(t,x) + e_{\rm int}(t,x),\ t > 0,\ x \in \Q,
\]
which rules out the former solution on the basis of the $\prec$ minimality.

\subsection{Total mass conservation and stability of equilibrium states}

It follows directly from the continuity equation \eqref{W4} that any dissipative solution conserves the total mass,
\begin{equation} \label{CC20}
\intQ{ \vr(\tau, \cdot) } = \intQ{ \vr_0 } = M \ \mbox{for any}\ \tau \geq 0.
\end{equation}
The equilibrium states
\[
\vr_M \equiv \frac{M}{|T^N|}\geq0, \ \vm_M \equiv 0, \ E_M \equiv \frac{a}{\gamma - 1} \intQ{ \vr^\gamma_M },
\]
are global in time regular solutions; whence, in accordance with the weak--strong uniqueness principle stated in
Proposition \ref{WP1},
\[
U\left\{ \vr_M, \vm_M = 0, E_M \right\} = [\vr_M, 0, E_M] \ \mbox{for any}\ M \geq 0.
\]
We claim that
\[
U\left\{ \vr_M, \vm_M = 0, E_0 \right\} = [\vr_M, 0, E_M]\ \mbox{for any}\ E_0 > E_M,
\]
meaning the energy cannot ``jump up'' for any dissipative solution in the selection starting from the equilibrium $[\vr_M, \vm_M]$. Indeed suppose
that
\[
[\vr, \vm, E] \in \mathcal{U}^\infty[\vr_M, 0 , E_0],\ E_0 > E_M.
\]
In accordance with \eqref{CC20}, the total mass is conserved, namely
\begin{equation} \label{CC23}
\intQ{ \vr(\tau, \cdot) } = \intQ{ \vr_M } = M \ \mbox{for any}\ \tau \geq 0.
\end{equation}
On the other hand, the energy is  weakly lower semi--continuous, whence
\[
\frac{a}{\gamma - 1} \intQ{ \vr^\gamma (\tau, \cdot) } \leq
\intQ{ \left[ \frac{1}{2} \frac{ |\vm|^2 }{\vr} + \frac{a}{\gamma - 1} \vr^\gamma \right] (\tau, \cdot) }
\leq E(\tau \pm)
\]
for any $\tau > 0$.
Finally, we use \eqref{CC23} and Jensen's inequality to obtain
\begin{align*}
\frac{1}{|\Q|} \intQ{ \vr_M^\gamma } &= \left( \frac{1}{|\Q|}  \intQ{ \vr_M } \right)^\gamma =
\left( \frac{1}{|\Q|}  \intQ{ \vr } \right)^\gamma \\&\leq \frac{1}{|\Q|} \intQ{ \vr^\gamma },
\end{align*}
where the equality holds if and only if $\vr = \vr_M$. Consequently $E(\tau \pm) \geq E_M$ for any $\tau > 0$, meaning
$[\vr, \vm, E_0]$ can be $\prec$ minimal if and only if $\vr = \vr_M$, $\vm = 0$.

We have obtained the following
\begin{Corollary} \label{COC1}

Let $[\vr, \vm, E] = U\{ \vr_0, \vm_0, E_0 \}$ belong to the semiflow constructed in Theorem~\ref{ST1}.
Suppose that
\[
\vr(T, \cdot) = \vr_M, \ \vm(T, \cdot) = 0 \ \mbox{for some}\ T \geq 0.
\]

Then
\[
\vr(\tau, \cdot) = \vr_M, \ \vm(\tau, \cdot) = 0 \ \mbox{for all}\ \tau \geq T.
\]

\end{Corollary}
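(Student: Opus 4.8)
The plan is to deduce the statement from the semigroup property of $U$ together with the stability-of-equilibrium identity already established in this subsection, rather than to re-run any PDE analysis. The idea is that once the trajectory hits the equilibrium state at time $T$, restarting the semiflow there reduces everything to the case of data $[\vr_M, 0, \mathcal{E}]$, which has just been shown to produce the constant equilibrium trajectory.

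First I would apply the semigroup property (Definition~\ref{SD1}) with $t_1 = T$. Writing $[\vr, \vm, E] = U\{\vr_0, \vm_0, E_0\}$, this gives, for every $t \geq 0$,
\[
U\{\vr_0, \vm_0, E_0\}(T + t) = U\{\vr(T), \vm(T), E(T-)\}(t) = U\{\vr_M, 0, E(T-)\}(t),
\]
where the last equality uses the hypothesis $\vr(T, \cdot) = \vr_M$, $\vm(T, \cdot) = 0$. Thus the whole question is reduced to identifying the trajectory emanating from the equilibrium data $[\vr_M, 0, E(T-)]$.

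Next I would verify that the restart energy is admissible for the equilibrium result, i.e. that $E(T-) \geq E_M$. Total mass is conserved by \eqref{CC20}, so $\intQ{\vr(T,\cdot)} = M$, and the weak lower semicontinuity of the energy together with Jensen's inequality (exactly as in the computation preceding the corollary) yields
\[
E(T-) \geq \frac{a}{\gamma - 1} \intQ{ \vr^\gamma(T, \cdot) } = \frac{a}{\gamma - 1} \intQ{ \vr_M^\gamma } = E_M,
\]
since $\vm(T, \cdot) = 0$ and $\vr(T, \cdot) = \vr_M$. With this I may invoke the identity established above, namely $U\{\vr_M, 0, \mathcal{E}\} = [\vr_M, 0, E_M]$ for every $\mathcal{E} \geq E_M$ (the case $\mathcal{E} = E_M$ being weak--strong uniqueness, Proposition~\ref{WP1}, and the case $\mathcal{E} > E_M$ following from the $\prec$-minimality of the selected semiflow). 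Taking $\mathcal{E} = E(T-)$ gives $U\{\vr_M, 0, E(T-)\}(t) = [\vr_M, 0, E_M]$ for all $t \geq 0$, hence $\vr(T+t, \cdot) = \vr_M$ and $\vm(T+t, \cdot) = 0$ for all $t \geq 0$, which is precisely the claim.

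The only delicate point, and the reason the preceding computation was carried out for \emph{arbitrary} $\mathcal{E} > E_M$ rather than merely for $\mathcal{E} = E_M$, is the bookkeeping of one-sided energy limits in the semigroup property: the restart is forced to use the left limit $E(T-)$, which could strictly exceed $E_M$ if an energetic sink occurred exactly at time $T$. Since the equilibrium identity covers that case, no genuine obstacle remains and the corollary is immediate.
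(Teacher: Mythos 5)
Your proof is correct and follows essentially the same route as the paper: you deduce the corollary from the semigroup property of Definition~\ref{SD1} combined with the identity $U\{\vr_M, 0, \mathcal{E}\} = [\vr_M, 0, E_M]$ for all $\mathcal{E} \geq E_M$, which is exactly what the paper establishes immediately before the corollary via mass conservation, weak lower semicontinuity of the energy, Jensen's inequality, and the $\prec$-minimality of the selection. Your explicit check that $E(T-) \geq E_M$, so that the restarted data $[\vr_M, 0, E(T-)]$ lies in $D$ and the equilibrium identity applies even if an energy jump occurs at $t = T$, is a bookkeeping point the paper leaves implicit, but it is the same argument.
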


\subsection{General {equation of state}}

The results presented above can be extended in a straightforward manner to a more general barotropic {equation of state} provided the pressure
$p = p(\vr)$ and the pressure potential $P(\vr)$ given by
\[
P'(\vr) \vr - P(\vr) = p(\vr),
\]
satisfy the asymptotic ``adiabatic law''
\[
p'(\vr) > 0 \ \mbox{for}\ \vr > 0,\
\lim_{\vr \to \infty} \frac{p(\vr)}{P(\vr)} = \gamma - 1 , \ \mbox{with}\ \gamma > 1.
\]
If $\gamma = 1$, we need an extra hypothesis
\[
\liminf_{\vr \to \infty} p'(\vr) > 0.
\]

\subsection{Relative energy inequality}
\label{REI}

Let $P$ be the pressure potential introduced in the previous section. We define the \emph{relative energy},
\[
\mathcal{E} \left( \vr, \vm \ \Big| r, \vc{U} \right) = 
\frac{1}{2} \vr \left| \frac{\vm}{\vr} - \vc{U} \right|^2 + P(\vr) - P'(r)(\vr - r) - P(r).  
\]
Following \cite{GSWW} we can derive the relative energy inequality 
\begin{equation*} %\label{REIF}
\begin{split}
&\intQ{ \mathcal{E} \left( \vr, \vm \ \Big| r, \vc{U} \right) (\tau, \cdot) } \\ &\leq 
 \left( E(0+) - \intQ{ \left[ \frac{|\vm_0|^2}{\vr_0} + P(\vr_0) \right] } \right)  + \intQ{ \mathcal{E} \left( \vr_0, \vm_0 \ \Big| r(0, \cdot) , \vc{U} (0, \cdot) \right)  }
 \\
&\quad+ \int_0^\tau \intQ{ \frac{1}{r} \Big(r \left( \partial_t \vc{U} +  \vc{U} \cdot \Grad \vc{U} \right) + \Grad p(r) \Big) \Big( \vr \vc{U} - \vm \Big) } \dt \\
&\quad+  \int_0^\tau \intQ{ P''(r) (r - \vr) \Big( \partial_t r + \Div (r \vc{U}) \Big) } \dt \\
&\quad +c\int_0^\tau \| \Grad \vc{U} \|_{L^\infty(\Q)}\intQ{\mathcal{E} \left( \vr, \vm \ \Big| r, \vc{U} \right)}\dt
\end{split}
\end{equation*}
that holds for any dissipative solution $[\vr, \vm, E]$ starting from the initial data $[\vr_0, \vm_0, E_0] \in D$, and any
$r \in W^{1,\infty}_{\rm loc}([0,\infty)\times\Q)$, $\vc{U} \in W^{1,\infty}_{\rm loc}([0,\infty)\times\Q; R^N)$, $r > 0$. In particular, we have by Gronwall's lemma
\[
\begin{split}
&\intQ{ \mathcal{E} \left( \vr, \vm \ \Big| r, \vc{U} \right) (\tau, \cdot) } \\ &\leq
\left[ \left( E(0+) - \intQ{ \left[ \frac{|\vm_0|^2}{\vr_0} + P(\vr_0) \right] } \right)  + \intQ{ \mathcal{E} \left( \vr_0, \vm_0 \ \Big| r(0, \cdot) , \vc{U} (0, \cdot )\right)  }
\right] \\
&\times 
\exp \left( c \int_0^\tau \| \Grad \vc{U} \|_{L^\infty(\Q)} \ \dt \right)
\end{split}
\]
for any strong solution $r, \vc{M} = r \vc{U}$, $r > 0$, of the Euler system, which yields the weak--strong
uniqueness property stated in Proposition \ref{WP1}.\\\

\centerline{\bf Declaration}
\noindent{
The authors declare that there are no conflicts of interest.}

\def\cprime{$'$} \def\ocirc#1{\ifmmode\setbox0=\hbox{$#1$}\dimen0=\ht0
  \advance\dimen0 by1pt\rlap{\hbox to\wd0{\hss\raise\dimen0
  \hbox{\hskip.2em$\scriptscriptstyle\circ$}\hss}}#1\else {\accent"17 #1}\fi}

\end{document}